\newcommand{\lb}{\left(}
\newcommand{\rb}{\right)}
\newcommand{\PD}{\partial}
\renewcommand{\d}{\delta}
\newcommand{\Beq}{\begin{equation}}
\newcommand{\Eeq}{\end{equation}}
\newcommand{\beq}{\begin{equation*}}
\newcommand{\eeq}{\end{equation*}}
\newcommand{\bal}{\begin{align}}
\newcommand{\eal}{\end{align}}
\renewcommand{\L}{\langle}
\newcommand{\vp}{\varphi}
\newcommand{\A}{\alpha}
\newcommand{\B}{\beta}
\newcommand{\bp}{\begin{prob}}
	\newcommand{\ep}{\end{prob}}
\newcommand{\bpr}{\begin{proof}}
	\newcommand{\epr}{\end{proof}}
\newcommand{\D}{\mathrm{d}}
\newcommand{\bel}[1]{\begin{equation}\label{#1}}
\newcommand{\ee}{\end{equation}}
\newtheorem{theorem}{Theorem}[section]
\newtheorem{lemma}[theorem]{Lemma}
\newtheorem{proposition}[theorem]{Proposition}
\newtheorem{corollary}[theorem]{Corollary}
\newtheorem{definition}[theorem]{Definition}
\newtheorem{remark}[theorem]{Remark}
\numberwithin{equation}{section}
\newcommand\numberthis{\addtocounter{equation}{1}\tag{\theequation}}
\renewcommand{\d}{\delta}
\newcommand{\Ac}{\mathcal{A}}
\newcommand{\Dc}{\mathcal{D}}
\newcommand{\Ec}{\mathcal{E}}
\newcommand{\Sc}{\mathcal{S}}
\newcommand{\Tc}{\mathcal{T}}
\newcommand{\Nb}{\mathbb{N}}
\newcommand{\Rb}{\mathbb{R}}
\newcommand{\Sb}{\mathbb{S}}
\newcommand{\Sn}{\mathbb{S}^{n-1}}
\newcommand{\R}{\rangle}
\newcommand{\p}{\partial}
\newcommand{\Dxi}{\D \mathrm{S}_\xi}
\renewcommand{\L}{\langle}
\newcommand{\dbc}[1]{\begin{quotation}\textbf{\color{teal}Divyansh's comment:\
		}{\color{teal}\textit{#1}}\end{quotation}}
\title[UCP for generalized ray transforms]{Unique continuation results for certain generalized ray transforms of symmetric tensor fields}
\author[D.\ Agrawal, V.~ P.~ Krishnan and S. ~K.~ Sahoo]{Divyansh Agrawal$^{\ast}$,  Venkateswaran  P. Krishnan$^{\mathsection}$ and Suman Kumar Sahoo$^{\dagger}$}
\address {$^{\ast}$ Centre for Applicable Mathematics, Tata Institute of Fundamental Research, India.
\newline
E-mail:{\tt\  agrawald@tifrbng.res.in}}
\address{$^{\mathsection}$ Centre for Applicable Mathematics, Tata Institute of Fundamental Research, India.
\newline
E-mail:{\tt \ vkrishnan@tifrbng.res.in}}
\address{$^{\dagger}$ Department of Mathematics and Statistics, University of Jyv\"askyl\"a, Finland.
\newline
E-mail:{\tt \ suman.k.sahoo@jyu.fi}}
\begin{document}
\begin{abstract}
Let $I_{m}$ denote the Euclidean ray transform acting on compactly supported symmetric $m$-tensor field distributions $f$, and $I_{m}^{*}$ be its formal $L^2$ adjoint. We study a unique continuation result for the normal operator  $N_{m}=I_{m}^{*}I_{m}$. More precisely, we show that if $N_{m}$ vanishes to infinite order at a point $x_0$ and if the Saint-Venant operator $W$ acting on $f$ vanishes on an open set containing $x_0$, then $f$ is a potential tensor field. This generalizes two recent works of Ilmavirta and M\"onkk\"onen who proved such unique continuation results for the ray transform of functions and vector fields/1-forms. One of the main contributions of this work is identifying the Saint-Venant operator acting on higher order tensor fields as the right generalization of the exterior derivative operator acting on 1-forms, which makes unique continuation results for ray transforms of higher order tensor fields possible.  In the second  half of the paper, we prove analogous unique continuation results for momentum ray and transverse ray transforms.
\end{abstract}

 \subjclass[2010]{Primary 46F12,  35J40}
	\subjclass[2020]{Primary 46F12, 45Q05}
	\keywords{UCP for ray transforms; symmetric tensor fields; tensor tomography; Saint-Venant operator}
\maketitle

\section{Introduction}

The purpose of this paper is to prove unique continuation properties (UCP) for three Euclidean ray transforms of symmetric $m$-tensor fields; the (usual) ray transform, momentum ray transform and transverse ray transform. Roughly speaking, we show the following: Let $f$ be a compactly supported $m$-tensor field distribution and $U$ be a non-empty open subset of $\Rb^n$ for $n\geq 2$.
\begin{enumerate}
    \item If the ray transform of $f$ vanishes $u$ and if the Saint-Venant operator acting on $f$ vanishes on the same open set, then $f$ is a potential tensor field.
    \item If certain momentum ray transforms of $f$ vanish on a set of lines passing through $U$ and if the generalized Saint-Venant operator acting on $f$ vanishes on the same open set, then $f$ is a generalized potential tensor field.
    \item Let $n\geq 3$. If the transverse ray transform of $f$ vanishes on a set of lines passing through $U$ and if $f$ vanishes on the same open set, then $f$ vanishes identically.
\end{enumerate}

We actually prove stronger versions of some of the the statements mentioned above; see the precise statements of the theorems in the concluding paragraphs of Section \ref{PSM}. 

The study of the three transforms on symmetric tensor fields is motivated by applications in several applied fields. The investigation of ray transform of symmetric 2-tensor fields is motivated by applications in travel-time tomography \cite{Sharafutdinov_Book,Uhlmann-TTT} and that of symmetric 4-tensor fields in elasticity \cite{Sharafutdinov_Book}. 
The study of momentum ray transforms was introduced by Sharaftudinov \cite{Sharafutdinov_Book} and a more detailed investigation of this transform was undertaken in \cite{Abhishek-Mishra,Krishnan2018,Krishnan2019a,SumanSIAM}. Analysis of such transforms appeared recently in the solution of a Calder\'on-type inverse problem for polyharmonic  operators; see \cite{BKS}.
Transverse ray transform of symmetric tensor fields appear in the study of polarization tomography \cite{Sharafutdinov_Book,Novikov-Sharafutdinov,Lionheart-Sharafutdinov} and X-ray diffraction strain tomography \cite{Lionheart-Withers,Desai-Lionheart}.

We note that the recovery of a symmetric $m$-tensor field $f$ from the knowledge of its  ray transform $I_mf$ is an over-determined problem in dimensions $ n\ge 3$. However, the recovery of $f$ given the normal operator $ N_mf=I_m^*I_mf$, when viewed as a convolution operator; see \eqref{NormalOperator_Ray}, is a formally determined inverse problem, where $I_m^{*}$ is the formal $L^2$ adjoint. Furthermore, we study a partial data problem, that is,  the recovery of $f$, from the knowledge of $N_mf$ and a component of $f$ given in a fixed open subset of $\mathbb{R}^n$.  We prove a unique continuation result for this as well as for two other transforms; the momentum ray and transverse ray transforms. The motivation for a result of this kind for the ray transform comes from its connection to the fractional Laplacian operator. The inversion formula for the recovery of a function (for instance) from its corresponding normal operator is given by the following formula: $f=C (-\Delta)^{1/2}I_0^* I_0f$, where $C$ is a constant that depends only on dimension. Unique continuation results for the fractional operators have a long history and go back to the works of Riesz \cite{Riesz} and Kotake-Narasimhan \cite{Kotake-Narasimhan}. A unique continuation result for a fractional Schr\"odinger equation with rough potentials was done in \cite{Ruland_fractional}. In the context of a Calder\'on-type inverse problem involving the fractional Laplacian, a unique continuation result was employed to prove uniqueness in \cite{GSU-fractional}. 

Unique continuation results for the ray transform of functions, the $d$-plane transform and the Radon transform were initiated in the paper \cite{Keijo_partial_function}. Later this was extended to a unique continuation result for the Doppler transform, which deals with the ray transform of a vector field or equivalently a 1-form in \cite{Keijo_partial_vector_field}. One added difficulty in dealing with the Doppler transform or ray transform of higher order symmetric tensor fields is that the ray transform has an infinite dimensional kernel. Therefore unique recovery of the full symmetric tensor field from its ray transform is not possible. Going back to the paper \cite{Keijo_partial_vector_field}, roughly speaking, the main result of the paper reads as follows: Let $f$ be a compactly supported vector field/1-form  and suppose $\D f=0$ on an non-empty open set $U$, where $\D f$ is the exterior derivative of the 1-form $f$, and if the Doppler transform of $f$ vanishes along all lines intersecting $U$, then $\D f\equiv 0$ in $\Rb^n$.   The results of our paper can be viewed as a generalization of this work. The approach of \cite{Keijo_partial_vector_field} is to reduce the unique continuation result for the Doppler transform to that of the scalar ray transform of each component of $\D f$. We follow their idea of reducing to a unique continuation result for a scalar function for the symmetric tensor field case, however, our approach as well as the technique of proof are different. Our main contribution is in identifying the right analogue of the exterior derivative operator to higher order symmetric tensor fields case, which turns out to be the Saint-Venant operator, to prove the unique continuation results for ray transform of higher order symmetric tensor fields. We also prove unique continuation results for momentum ray transforms as well as for transverse ray transform of symmetric tensor fields. To prove unique continuation results for the momentum ray transform, we consider the generalized Saint-Venant operator introduced by Sharafutdinov \cite{Sharafutdinov_Book}. In fact, we define an equivalent version of the generalized Saint-Venant operator from \cite{Sharafutdinov_Book} suitable for our purposes to prove our result.


The article is organized as follows. In Section \ref{PSM}, we give the requisite preliminaries and give the statement of the main results. Readers familiar with the integral geometry literature may choose to skip the parts of the section where we fix the notation required to give the statements of the theorems. Instead, they may go directly to the results near the end of Section \ref{PSM} and refer back to the preliminary material as and when required. 
Sections \ref{UCP:Ray}, \ref{UCP:MRT} and \ref{UCP:TRT} give the proofs of the unique continuation results for ray transform, momentum ray transform and transverse ray transform, respectively. 


\section{Preliminaries and statements of the main results}\label{PSM}
To state the main results of this work, we begin by defining the operators that will be used throughout the article. Most of these are standard in integral geometry literature and the reference is the book by Sharafutdinov \cite{Sharafutdinov_Book}. For the purpose of fixing the notation, we give them here.
\subsection{Definitions of some operators}
We let $T^m= T^m\Rb^n$ denote the complex vector space of $\Rb$-multi-linear functions from $\underbrace{\Rb^n \times \cdots \times \Rb^n}_{m \text{ times}} \to \mathbb{C}$. Let $e_1,\cdots, e_n$ be the standard basis for $\Rb^n$.  Given an element $u\in T^m$, we let $u_{i_1 \cdots i_m}= u(e_{i_1}, \cdots, e_{i_m})$. These are the components of the tensor $T^m$.  

Given $u\in T^m$ and $v\in T^k$, the tensor product $u\otimes v \in T^{m+k}$ is defined by 
\[
(u\otimes v)(x_1,\cdots, x_m, x_{m+1},\cdots,x_{m+k}) = u(x_1,\cdots, x_m) v(x_{m+1},\cdots, x_{m+k}).
\]

By $S^m = S^m\Rb^n$, we mean the subspace of $T^m$ that are symmetric in all its $m$ arguments. More precisely, $u$ is an element of $S^m$ if 
\[
u_{i_1 \cdots i_m} = u_{i_{\pi(1)} \cdots i_{\pi(m)}}
\] 
for any $\pi \in \Pi_m$ -- the group of permutations of the set $\{ 1,\cdots, m\}$.

Let $\sigma: T^m \to S^m$ be the symmetrization operator defined as follows: 
\[
\sigma u(e_1,\cdots, e_m) = \frac{1}{m!} \sum\limits_{\pi \in \Pi_m} u(e_{\pi(1)},\cdots, e_{\pi(m)}).
\]

The symmetrized tensor product of two tensors will be denoted by $\odot$ instead of $\otimes$. That is, given $u\in T^m$ and $v\in T^k$, 
\[
\lb u\odot v\rb (x_1,\cdots,x_m,x_{m+1},\cdots, x_{m+k})=\frac{1}{(m+k)!}\sum\limits_{\pi\in \Pi_{m+k}} u(x_{\pi(1)},\cdots, x_{\pi(m)})v(x_{i_{\pi(m+1)}},\cdots, x_{i_{\pi(m+k)}}).
\]

Given indices $i_1, \cdots, i_m$ the operator of partial symmetrization with respect to the indices $i_1, \dots ,i_p$, where $p<m$,  of a tensor $u\in T^m $ is given by
\begin{equation*}
	\sigma (i_1 \dots i_p) u_{i_1 \dots i_m} = \frac{1}{p!} \sum\limits_{\pi \in \Pi_p} u_{i_{\pi(1)} \dots i_{\pi(p)} i_{p+1} \dots i_m},
\end{equation*}
where $\Pi_p$ denotes the group of permutations of the set $\{1, \dots , p\}$. 

We next define symmetric tensor fields. If $\Ac\subset \Dc'(\Rb^n)$, the space of $\Ac$-valued symmetric tensor field distributions of $\Rb^n$ is defined by $\Ac(\Rb^n; S^m) = \Ac\otimes_{\mathbb{C}} S^m$. Denote by  $C^{\infty}(\Rb^n; S^m )$, $\Sc(\Rb^n;S^m)$, $C_c^{\infty}(\Rb^n; S^m)$, $\Dc'(\Rb^n; S^m)$, $\Sc'(\Rb^n; S^m)$ and  $\Ec'(\Rb^n; S^m)$ the space of symmetric $m$-tensor fields in $\Rb^n$ whose components are  smooth, Schwartz class, smooth and compactly supported functions, tensor field, tempered tensor field, and  compactly supported tensor field distributions, respectively. An analogous definition is valid when $S^m$ above is replaced by $T^m$. 

The family of oriented lines in $\Rb^n$ is parameterized by 
\[
T\Sb^{n-1}=\{(x,\xi)\in \Rb^n\times \Sb^{n-1}: \langle x,\xi\rangle =0\},
\]
where $\langle \cdot, \cdot \rangle$ denotes the standard dot product in $\Rb^n$. 

The alternation operator $\A$ with respect to two indices $i_1, i_2$ is defined by  
\begin{equation*}
	\alpha(i_1 \, i_2) u_{i_1 i_2 j_1 \dots j_p} = \frac{1}{2} (u_{i_1 i_2 j_1 \dots j_p} - u_{i_2 i_1 j_1 \dots j_p}).
\end{equation*}

For $u \in S^k$, we denote by $i_u : S^m  \to S^{m+k}$ the operator of symmetric multiplication by $u$ and by $j_u : S^{m+k} \to S^{m}$ the operator dual to $i_u$. These are given by
\begin{align}
	\label{ioperator} (i_u f)_{i_1 \dots i_{m+k}} &= \sigma (i_1 \dots i_{m+k}) u_{i_1 \dots i_k} f_{i_{k+1} \dots i_{k+m}}    \\
	\label{jop} (j_u g)_{i_1 \dots i_{m}} &= g_{i_1 \dots i_{m+k}} u^{i_{m+1} \dots i_{m+k}}.
\end{align}
For the case in which $u$ is the Euclidean metric tensor, we denote $i_u$ and $j_u$ by $i$ and $j$ respectively. In \eqref{jop} and henceforth, we use the Einstein summation convention, that when the indices are repeated, summation in each of the repeating index varying from $1$ up to the dimension $n$ is assumed.

Next we define two important first order differential operators. The operator of inner differentiation or symmetrized derivative is denoted as $\D : C^{\infty}(\Rb^n; S^m) \to C^{\infty}(\Rb^n; S^{m+1})$ given by 
\begin{equation*}
	(\D f)_{i_1 \dots i_{m+1}} = \sigma(i_1 \dots i_{m+1}) \frac{\p f_{i_1 \dots i_m}}{\p x_{i_{m+1}}}.
\end{equation*}
The divergence operator $\d : C^{\infty}(\Rb^n; S^m) \to C^{\infty}(\Rb^n; S^{m-1})$ is defined by 
\begin{equation}\label{def_of_delta}
	(\d f)_{i_1 \dots i_{m-1}} = \frac{\p f_{i_1 \dots i_m}}{\p x^{i_{m}}}.
\end{equation}
The operators $\D$ and $-\d$ are formally dual to each other with respect to $L^2$ inner product.
\[
\langle u, v\rangle = \int u_{i_1 \cdots i_m} \overline{v}^{i_1 \cdots i_m} \, \D x.
\]

Note that the above definitions make sense for compactly supported tensor field distributions as well.

We now recall the  solenoidal-potential decomposition of compactly supported symmetric tensor field distributions \cite{Sharafutdinov_Book}. Let $n\geq 2$. For $f\in \Ec'(\Rb^n; S^m)$, there exist uniquely determined fields
$^s\!f\in  \Sc'(\Rb^n; S^m)$ and $v\in \Sc'(\Rb^n; S^{m-1})$ tending to $0$ at $\infty$ and satisfying
\[
f=\!^s\!f + \D v, \quad \delta ^s\!f=0.
\]
The fields $^s\!f$ and $\D v$ are called the solenoidal and potential components of $f$, respectively. The fields
$^s\! f$ and $v$ are smooth outside supp $f$ and satisfy the estimates
\[
\lvert ^s\!f(x)\rvert \leq C(1+|x|)^{1-n}, \lvert v(x)\rvert \leq C(1+|x|)^{2-n}, \lvert 
\D v(x)\rvert \leq C(1+|x|)^{1-n}.
\]


Finally, we define  the Saint-Venant operator $W$ and the generalized Saint-Venant operator $W^k$ for $0\leq k\leq m$.

The Saint-Venant operator $W: C^{\infty}(\Rb^n; S^m) \to C^{\infty}(\Rb^n; S^m \otimes S^m )$ is the  differential operator of order $m$ defined by
\begin{equation}\label{SV}
	(Wf)_{i_1 \dots i_m j_1 \dots j_m} = \sigma(i_1 \dots i_m) \sigma(j_1 \dots j_m) \sum\limits_{p=0}^m (-1)^p \binom{m}{p} \frac{\p^m f_{i_1 \dots i_{m-p} j_1 \dots j_p}}{\p x_{j_{p+1}} \dots \p x_{j_m} \p x_{i_{m-p+1}}, \dots \p x_{i_m}},
\end{equation}
where $S^m \otimes S^m$ denotes the set of tensors symmetric with respect to the group of first and last $m$ indices.

For our purposes, we will be using an equivalent formulation of the Saint-Venant operator using the operator $R: C^{\infty}(\Rb^n; S^m) \to C^{\infty}(\Rb^n; T^{2m})$ defined as follows:  
For $f \in C^{\infty}(\Rb^n; S^m)$,
\begin{equation}\label{RSV}
	(Rf)_{i_1 j_1 \dots i_m j_m} = \alpha(i_1 j_1) \dots \alpha(i_m j_m) \frac{\p^m f_{i_1 \dots i_m}}{\p x_{j_1} \dots \p x_{j_m}}.
\end{equation}
The  tensor field $R$ is skew-symmetric with respect to each pair of indices $(i_1, j_1), \dots (i_m, j_m)$, and symmetric with respect to these pairs. 
The operators $R$ and $W$ are equivalent. More precisely, 
\begin{align}
	(Wf)_{i_1 \dots i_m j_1 \dots j_m} &= 2^m\sigma(i_1 \dots i_m) \sigma(j_1 \dots j_m) (Rf)_{i_1 j_1 \dots i_m j_m}\label{RtoW}
	\\ (Rf)_{i_1 j_1 \dots i_m j_m} &= \frac{1}{(m+1)} \alpha(i_1 j_1) \dots \alpha(i_m j_m) (Wf)_{i_1 \dots i_m j_1 \dots j_m}\label{WtoR}.
\end{align}
We remark that in \cite{Sharafutdinov_Book}, these formulas have two minor typos;  the factor $2^m $ is missing in \eqref{RtoW} and the constant in \eqref{WtoR} is incorrectly written as $(m+1)$ on the right hand side.

We also need the following generalization \cite{Sharafutdinov_Book} of the Saint-Venant operator. 

For $m \geq 0$ and $0 \leq k \leq m$, the generalized Saint-Venant operator $W^k : C^{\infty}(\Rb^n; S^m) \to C^{\infty}(\Rb^n; S^{m-k} \otimes S^m )$ is defined as
\begin{align}\label{GSV}
	(W^k f)_{p_1 \dots p_{m-k} q_1 \dots q_{m-k} i_1 \dots i_k} =& \sigma(p_1, \dots , p_{m-k}) \sigma(q_1, \dots , q_{m-k}, i_1 \dots i_k) 
	\\ & \sum_{l=0}^{m-k} (-1)^l \binom{m-k}{l} \frac{\p^{m-k} f^{i_1 \dots i_k}_{p_1 \dots p_{m-k-l} q_1 \dots q_{l}}}{\p x^{p_{m-k-l+1}} \dots \p x^{p_{m-k}} \p x^{q_{l+1}} \dots \p x^{q_{m-k}}}, \notag
\end{align}
where we adopt the notation from \cite{SumanSIAM} and by $f^{i_1 \dots i_k}_{p_1 \dots p_{m-k}}$, we mean a tensor field of order $m-k$ with the indices on the top fixed ($i_1 \dots i_k$ here). Note that for $k=0$, \eqref{GSV} agrees with \eqref{SV}, and for $k=m$, $W^m = I$ -- the identity operator. 

Next we  define a generalization of the operator $R$ as follows. For $f\in C^{\infty}(\Rb^n; S^m )$,  
\begin{equation}
	\begin{aligned}\label{an_alternate_definition}
		(R^k f)_{p_1 q_1 \dots p_{m-k} q_{m-k} i_1 \dots i_k} &= \alpha (p_1 q_1) \dots \alpha (p_{m-k} q_{m-k}) \frac{\p^{m-k} f^{i_1 \dots i_k}_{p_1 \dots p_{m-k}}}{\p x_{q_1} \dots \p x_{q_{m-k}}}
		\\ &= R(f^{i_1 \dots i_k})_{p_1 q_1 \dots p_{m-k} q_{m-k}}.
	\end{aligned}
\end{equation}
As with the equivalence of $W$ and $R$, we show the equivalence of $W^k$ and $R^k$ in Section \ref{UCP:MRT}.

Finally, note that the operators $W, R, W^k$ and $R^k$ are well-defined for tensor field distributions as well.

\subsection{Ray, momentum and transverse ray transforms}
We now define the ray transform, momentum and transverse ray transforms whose unique continuation properties we study in this paper. We initially define these transforms on the space of smooth compactly supported symmetric tensor fields. These will be extended to compactly supported tensor field distributions later.

The ray transform $I$ is the bounded linear operator 
\[
I_m: C_c^{\infty}(\Rb^n; S^{m}) \to C_c^{\infty}(T\Sb^{n-1})
\]
defined as follows: 
\Beq\label{def:ray_transform}
I_mf (x,\xi) = \int\limits_{\Rb} f_{i_1 \cdots i_m}(x+t\xi) \xi^{i_1} \cdots \xi^{i_m} \, \D t=\int\limits_{\Rb} \langle f(x+t\xi), \xi^{ m}\rangle \, \D t
\Eeq
This can be naturally extended to points $(x,\xi)\in \Rb^n \times \Rb^n\setminus \{0\}$ using the same definition. We will denote the extended operator by $J_m: C_c^{\infty}(\Rb^n; S^m) \to C^{\infty}(\Rb^n \times \Rb^n\setminus \{0\})$: 
\[
J_mf(x,\xi)= \int f_{i_1 \cdots i_m}(x+t\xi) \xi^{i_1} \cdots \xi^{i_m} \,\D t.
\]
In fact, the operators $I_m$ and $J_m$ are equivalent. Restricting $J_mf$ to points on $T\Sb^{n-1}$ determines $I_mf$. For the other way, we use the following homogeneity properties: 
\begin{align*}
	&J_m(x,r\xi) =  \frac{r^{m}}{|r|} (J_m^k f)(x, \xi) \quad \mbox{for} \quad r\neq 0,\\
	&J_m(x+s\xi,\xi)= J_m(x,\xi).
	\end{align*}
From this we have 
\begin{align*}
	J_mf(x,\xi)= |\xi|^{m-1}I_m f\lb x-\frac{\langle x,\xi\rangle \xi}{|\xi|^2}, \frac{\xi}{|\xi|}\rb \mbox{ for } (x,\xi)\in \Rb^n\times \Rb^n\setminus \{0\}.
\end{align*}

The main reason for working with $J_m$ instead of $I_m$ is that the partial derivatives $\frac{\PD (Jf)}{\PD x_i}$ and $\frac{\PD (Jf)}{\PD \xi_i}$ are well-defined for all $1\leq i\leq n$.

The momentum ray transforms  are the bounded linear operators, $I_m^k: C_c^{\infty}(\Rb^n; S^m) \to C_c^{\infty}(T\Sb^{n-1})$, defined for each $k\geq 0$ as 
\Beq\label{def_mrt}
(I_m^kf)(x,\xi) = \int\limits_{-\infty}^{\infty} t^k f_{i_1\cdots i_m}(x+t\xi)\xi^{i_1} \cdots \xi^{i_m}  \, \D t.
\Eeq
The operator $I_m^0$ is of course the ray transform $I$ defined above.

Similar to the case of ray transform, the momentum ray transforms can be extended for points $(x,\xi)\in \Rb^n \times \Rb^n \setminus \{0\}$. The extended operators will be denoted by
\begin{equation}
	J_m^k : C_c^{\infty}(\Rb^n; S^m) \to C^{\infty}(\mathbb{R}^n \times \mathbb{R}^n\backslash \{0\})
\end{equation}
using the same definition. 
The operators $J_m^k$  satisfy the following \cite{Krishnan2018}: 
\begin{align}
	(J_m^k f) (x, r \xi) &= \frac{r^{m-k}}{|r|} (J_m^k f)(x, \xi) \quad \mbox{for} \quad r\neq 0 \label{homo_mrt1}
	\\ (J_m^k f)(x+ s \xi, \xi) &= \sum\limits_{l=0}^k \binom{k}{l} (-s)^{k-l} (J_m^l f) (x, \xi) \quad \mbox{for}\quad s\in \mathbb{R}.
	\label{homo_mrt2}
\end{align}

The data $(I_m^0f, I_m^1f, \dots , I_m^kf)$ and $(J_m^0f, J_m^1f, \dots , J_m^kf)$  for $0\leq k\leq m$  are equivalent (\cite[Equations 2.5, 2.6]{Krishnan2018}). As with the case of the ray transform, it is convenient to work with the operators $J_m^k$ because the partial derivatives $\frac{\partial (J_m^kf)}{\partial x^i}$ and $\frac{\partial (J_m^kf)}{\partial \xi^i}$ are well-defined for all $1 \leq i \leq n$. 


For $f \in C_c^{\infty}(\Rb^n; S^m)$, we define the transverse ray transform. Let 
$$ T\Sb^{n-1}  \oplus T\Sb^{n-1}  = \{(\omega, x, y) \in \Sb^{n-1} \times \Rb^n \times \Rb^n: \omega \cdot x =0 ,  \omega \cdot y  =0\}$$
be the Whitney sum.

The transverse ray transform  $\Tc: C_c^\infty(\Rb^n; S^m) \rightarrow C^\infty(T\Sb^{n-1}  \oplus T\Sb^{n-1})$ is the bounded linear map defined by 
\Beq\label{def:trt}
\Tc f(\omega, x,y) =  \int_\Rb f_{i_1 \cdots i_m}(x+t\omega) y^{i_1} \cdots y^{i_m}\, \D t.
\Eeq


\subsection{Normal operators}

Next we extend the definitions of the ray transforms  to compactly supported tensor field distributions. We also define the corresponding normal operators. 

 For the case of ray transform \eqref{def:ray_transform},  the definition can be extended to compactly supported tensor field distributions  as done in \cite{Sharafutdinov_Book}: 

For $f\in \Ec'(\Rb^n; S^m)$,  we define $I_mf\in \Ec'(T\Sb^{n-1})$ as 

\[
\langle I_m f , \vp \rangle = \langle f, I_m^{*} \vp\rangle \mbox{ for } \vp \in C^{\infty}(T\Sb^{n-1}),
\]
where 
\[
\lb I_m^{*}\rb _{i_1 \cdots i_{m}} \vp(x)= \int\limits_{\Sb^{n-1}} \xi_{i_1} \cdots \xi_{i_m}\vp(x-\langle x,\xi\rangle \xi,\xi)\, \Dxi.
\]
Here and henceforth, $\Dxi$ is the Euclidean surface measure on the unit sphere.

Similarly, if we work with $J_m$, we can define $J_m f$ for $f\in \Ec'(\Rb^n; S^m)$ using the following formal $L^2$ adjoint: 
\[
\lb J_m^{*}\rb_{i_1 \cdots i_m} \vp(x)=\int\limits_{\Sb^{n-1}}\int\limits_{\Rb}\xi_{i_1} \cdots \xi_{i_m} \vp(x-t\xi, \xi) \D t \Dxi \mbox{ for } \vp \in C_c^{\infty}(\Rb^n \times \Sb^{n-1}).
\]

The normal operator $N_m f= I_m^{*} I_m f$ has the following integral representation \cite{Sharafutdinov_Book}.
\Beq\label{NormalOperator_Ray}
\lb N_mf\rb_{i_1 \cdots i_m}= 2 f_{j_1 \cdots j_m} * \frac{ \lb x^{\odot 2m}\rb_{j_1 \cdots j_m i_1 \cdots i_m}}{\lVert x  \rVert^{2m+n-1}}.
\Eeq
This representation makes sense for $f\in \Ec'(\Rb^n; S^m)$ as the convolution of a compactly supported distribution and a tempered distribution.

Next we define momentum ray transform of compactly supported tensor distributions. This was studied recently in the context of an inverse problem for polyharmonic operators in \cite{BKS}. We work in a slightly different context here and for this reason, we give the details. 

Let us first derive a representation for the formal $L^2$ adjoint $\lb I_m^k\rb^{*}$ of $I_m^k$. 
Consider for $f \in C_c^{\infty}(\Rb^n)$ and $g \in C^{\infty}(T\Sn)$
\begin{align*}
	\langle I_m^k f, g \rangle_{T\mathbb{S}^{n-1}}&=\langle f, (I_m^k)^*g \rangle_{\mathbb{R}^n} 
	\\ &= \int\limits_{\mathbb{S}^{n-1}} \int\limits_{\xi^{\perp}} (I_m^kf)(x,\xi)\, g(x,\xi)  \,\D x\,  \Dxi
	\\ &= \int\limits_{\mathbb{S}^{n-1}} \int\limits_{\xi^{\perp}} \int\limits_{-\infty}^{\infty}t^k \, \langle f(x+t\xi),\xi^m \rangle \, \D t \, g(x,\xi) \,\D x \,\Dxi
	\\ &=  \int\limits_{\mathbb{S}^{n-1}} \int\limits_{\mathbb{R}^n} \langle z,\xi\rangle^k \, \langle f(z),\xi^m \rangle \, g(z-\langle z, \xi\rangle\xi,\xi) \,\D z \,\Dxi,
	\intertext{where we employed the change of variables $x + t\xi = z$ for $x \in \xi^{\perp}$ and $t \in \mathbb{R}$ for each fixed $\xi \in \mathbb{S}^{n-1}$. Now  interchanging the order of integration }
	\langle f, (I_m^k)^*g \rangle_{\mathbb{R}^n} &= \int\limits_{\mathbb{R}^n} f_{i_1 \dots i_m}(z) \, \left\{ \int\limits_{\mathbb{S}^{n-1}} \langle z,\xi\rangle^k \xi_{i_1} \dots \xi_{i_m} \, g(z-\langle z,\xi\rangle\xi,\xi) \, \D \xi \right \} \, \D z.
	\intertext{
		Thus the formal $L^2$-adjoint of $I_m^k$,  $(I_m^k)^* : C^{\infty}(T\Sn) \to C^{\infty}(\Rb^n; S^m)$ is given by the expression} 
	(I_m^k)^*g_{i_1 \dots i_m} (x) &= \int\limits_{\mathbb{S}^{n-1}} \langle x, \xi\rangle^k \, \xi_{i_1} \cdots \xi_{i_m} \, g(x-\langle x, \xi\rangle \xi, \xi) \, \Dxi. \numberthis{} \label{adjoint_mrt}
\end{align*}

Using this we can extend momentum ray transforms for compactly supported tensor field distributions as follows. $I_m^k: \Ec'(\Rb^n; S^m) \to \Ec'(T\Sn)$ given by
\begin{equation}
	\L I_m^kf , g \R = \L f , (I_m^k)^* g \R
\end{equation}
for $f \in \Ec'(\Rb^n; S^m)$ and $ g \in C^{\infty}(T\Sn)$. 

Similarly, if we work with $J_m^{k}$, then for $f\in \Ec'(\Rb^n; S^m)$, $J_m^k f$ can be defined as follows: 

\[
	\L J_m^kf , g \R = \L f , (J_m^k)^* g \R
\]
where 
\[
(J_m^k)_{i_1 \cdots i_m}^* g(x)= \int\limits_{\Sb^{n-1}}\int\limits_{\Rb} t^{k}g(x-t\xi, \xi)\xi^{i_1} \cdots \xi^{i_m}\, \D t\, \Dxi \quad \mbox{ for } \quad g\in C_c^{\infty}(\Rb^{n}\times \Sb^{n-1}).
\]


We next study normal operators of momentum ray transforms. 
Let us denote $N_m^k= (I_m^k)^* I_m^k : C_c^{\infty}(\Rb^n; S^m) \to C^{\infty}(\Rb^n; S^m) $ be the normal operator of the $k^{\mathrm{th}}$ momentum ray transform of a symmetric $m$-tensor field. By \eqref{adjoint_mrt},
\begin{align*}
	(N_m^k \, f)_{i_1 \dots i_m} (x) &= (I_m^k)_{i_1 \cdots i_m}^* \, I_m^k \, f \, (x)
	\\ &= \int\limits_{\mathbb{S}^{n-1}} \langle x, \xi\rangle^k \, \xi^{i_1} \dots \xi^{i_m} \, I^k f(x-\langle x, \xi\rangle \xi, \xi) \, \Dxi.
\end{align*}

Note that for $x \in \mathbb{R}^n$ and $\xi \in \mathbb{S}^{n-1}$, $(x- \langle x, \xi\rangle \xi,\xi) \in T\mathbb{S}^{n-1}$. Since $I_m^k$ and $J_m^k$ agree on $T\Sn$, we have 
\begin{align}\label{Normal}
	(N_m^k \, f)_{i_1 \dots i_m} (x) &= \int\limits_{\mathbb{S}^{n-1}} \langle x, \xi\rangle^k \, \xi_{i_1} \dots \xi_{i_m} \, J_m^k f(x-\langle x, \xi\rangle \xi, \xi) \, \Dxi.
\end{align}

Using \eqref{homo_mrt2}, we have
\begin{align*}
	(N_m^k \, f)_{i_1 \dots i_m} (x) &= \sum\limits_{l=0}^k \binom{k}{l} \int\limits_{\Sn} \langle  x, \xi\rangle^{2k-l} \xi_{i_1} \dots \xi_{i_m} (J_m^l f) (x, \xi)\, \Dxi
	\\ &= \sum\limits_{l=0}^k \binom{k}{l} \int\limits_{\Sn} \int\limits_{-\infty}^\infty  \langle x, \xi\rangle^{2k-l} \, \xi_{i_1} \dots \xi_{i_m} \,  t^l \, f_{j_1 \dots j_m}(x + t \xi) \xi^{j_1} \dots \xi^{j_m}\, \D t\,  \Dxi 
	\\ &= 2 \sum\limits_{l=0}^k \binom{k}{l} \int\limits_{\Sn} \int\limits_{0}^\infty  \langle x, \xi\rangle^{2k-l} \, \xi_{i_1} \dots \xi_{i_m} \,  t^l \, f_{j_1 \dots j_m}(x + t \xi) \xi^{j_1} \dots \xi^{j_m} \,\D t\, \Dxi.
\end{align*}
Consider the change of variable $x + t\xi = y$ to obtain $t = |y-x|$, $\xi = \frac{y-x}{|y-x|}$. We have

\begin{equation*}
\begin{aligned}
    &(N_m^k f)_{i_1 \dots i_m} (x)\\& \quad= 2 \sum\limits_{l=0}^k \binom{k}{l} \int\limits_{\Rb^n} \langle x, \frac{y-x}{|y-x|} \rangle^{2k-l} \lb \left(\frac{y-x}{|y-x|}\right)^{\odot 2m}\rb_{i_1 \dots i_m j_1 \dots j_m}  \, f_{j_1 \dots j_m} (y) \,  \frac{|y-x|^{l}}{|y-x|^{n-1}} \, \D y.
\end{aligned}
\end{equation*}
Note that for $x, z \in \Rb^n$, we can write $\langle x, z\rangle^r = j_{x^{\odot r}} z^{\odot r}$. 
Then we can write 
\begin{align*}
	(N_m^k f)_{i_1 \dots i_m} (x) &= 2 \sum\limits_{l=0}^k \binom{k}{l}  \int\limits_{\Rb^n} \lb j_{x^{\odot 2k-l}}(y-x)^{\odot 2m+2k-l}\rb_{i_1 \cdots i_m j_1 \cdots j_m} \frac{f_{j_1 \dots j_m}(y)}{|y-x|^{2m+2k-2l +n-1}} \, \D y.
\end{align*}
This gives
\begin{equation}\label{nmrt}
	(N_m^k f)_{i_1 \dots i_m} (x) = 2 \sum\limits_{l=0}^k \binom{k}{l} (-1)^l x^{\odot 2k-l}_{p_1 \cdots p_{2k-l}} \left[ f_{j_1 \dots j_m} * \frac{\lb x^{\odot 2m+2k-l}\rb_{p_1 \cdots p_{2k-l} i_1 \cdots i_m j_1 \dots j_m}}{|x|^{2m+2k-2l+n-1}} \right].
\end{equation}
Equation \eqref{nmrt} makes sense for $f\in\Ec'(\Rb^n; S^m)$ as well. For $f\in  \Ec'(\Rb^n; S^m)$, $N_m^k: \Ec'(\Rb^n; S^m) \to \Dc'(\Rb^n; S^m)$ can be viewed as a multiplication of a smooth function with the convolution of a compactly-supported distribution and a tempered distribution.

We will use in our calculations the divergence of the normal operator of momentum ray transforms given by 
\begin{equation}
	(\delta N_m^k f)_{i_1 \dots i_{m-1}} (x) = k \, \int\limits_{\mathbb{S}^{n-1}} \langle x, \xi\rangle^{k-1} \, \xi_{i_1} \dots \xi_{i_{m-1}} \, J^k f(x-\langle x, \xi\rangle \xi, \xi) \, \Dxi.
\end{equation}
This can be obtained by directly applying the divergence operator to \eqref{Normal}. Iterating, we get the formula

\begin{align}
	(\delta^r N_m^k f)_{i_1 \dots i_{m-r}} (x) &= \frac{k!}{(k-r)!} \, \int\limits_{\mathbb{S}^{n-1}} \langle x, \xi\rangle^{k-r} \, \xi_{i_1} \dots \xi_{i_{m-r}} \, J^k f(x-\langle x,\xi\rangle \xi, \xi) \, \Dxi \notag
	\\ &= \frac{k!}{(k-r)!} \, \sum\limits_{l=0}^k \binom{k}{l} \int\limits_{\mathbb{S}^{n-1}} \langle x, \xi\rangle^{2k-r-l} \, \xi_{i_1} \dots \xi_{i_{m-r}} \, J^l f(x, \xi) \, \Dxi. \notag
\end{align}
In particular, 
\begin{equation}\label{div_nmrt}
	(\delta^k N_m^k f)_{i_1 \dots i_{m-k}} (x) = k! \, \sum\limits_{l=0}^k \binom{k}{l} \int\limits_{\mathbb{S}^{n-1}} \langle x, \xi\rangle^{k-l} \, \xi_{i_1} \dots \xi_{i_{m-k}} \, J^l f(x, \xi) \, \Dxi, \end{equation}
and
\begin{equation}
	\delta^{k+1} N_m^k f = 0.
\end{equation}

To conclude this section, we remark that our approach for proving a unique continuation principle for the transverse ray transform on symmetric tensor fields is based on the analysis for the ray transform of scalar functions/distributions, and since we already know to handle this case, we do not define transverse ray transform of symmetric tensor field distributions separately.

We are now ready to state the main results of the article. 

We say a function \emph{$\psi$ vanishes to infinite order at a point $x_0 \in \Rb^n$} if $\psi$ is smooth in a neighborhood of $x_0$ and $\psi$ along with its partial derivatives of all orders vanishes at $x_0$, that is, $\p^\alpha_{x} \psi (x_0) = 0$ for all multi-indices $\alpha$.

\begin{theorem}[UCP for ray transform I]\label{ucp_nrt}
	Let $U \subseteq \Rb^n$ be any non empty open set and $ n\ge 2$. Let $f \in \Ec'(\Rb^n; S^m)$ be such that $Rf|_{U} = 0$ and $N_mf$ vanishes to infinite order at some $x_0 \in U$. Then $f$ is a potential field, that is,  there exists a $v \in \Ec'(\Rb^n; S^{m-1})$ such that $f = \D v$. 
\end{theorem}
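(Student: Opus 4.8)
The plan is to deduce from the two hypotheses that $Rf$ vanishes on \emph{all} of $\Rb^n$; the conclusion then follows immediately. Indeed, since $R$ and the Saint-Venant operator $W$ are equivalent by \eqref{RtoW}--\eqref{WtoR}, and the kernel of $W$ consists exactly of potential fields \cite{Sharafutdinov_Book}, $Rf\equiv 0$ gives $f=\D v$ for some $v$. That $v$ may be taken in $\Ec'(\Rb^n;S^{m-1})$ follows by choosing the canonical $v$ from the solenoidal decomposition $f={}^s\!f+\D v$ recalled above and observing that $\D v=f$ is compactly supported while $v$ decays at infinity, which forces $v$ to be compactly supported.

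To prove $Rf\equiv 0$ I would reduce to the scalar unique continuation result for the normal operator of the ray transform of functions \cite{Keijo_partial_function}, applied componentwise to the $2m$-tensor field $Rf$. That result reads: if $u\in\Ec'(\Rb^n)$ vanishes in a neighbourhood of $x_0$ and the scalar normal operator $N_0u$ vanishes to infinite order at $x_0$, then $u\equiv 0$. Fix an arbitrary component $g:=(Rf)_{i_1 j_1\cdots i_m j_m}\in\Ec'(\Rb^n)$. Since $x_0\in U$ and $U$ is open, the hypothesis $Rf|_U=0$ gives $g=0$ in a neighbourhood of $x_0$; the first hypothesis of the scalar result is therefore satisfied, and it remains only to show that $N_0g$ vanishes to infinite order at $x_0$.

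The crux is a \emph{local} identity relating the scalar normal operator applied to the components of $Rf$ with the tensorial normal operator $N_mf$, namely
\[
N_0\big((Rf)_{i_1 j_1\cdots i_m j_m}\big)=c_{m,n}\,\big(R(\mathcal C\,N_mf)\big)_{i_1 j_1\cdots i_m j_m},
\]
where $c_{m,n}\neq 0$ and $\mathcal C$ is a fixed, $\xi$-independent algebraic operator on symmetric $m$-tensors, assembled from the identity and the metric operators $i,j$ of \eqref{ioperator}--\eqref{jop}. I would prove this on the Fourier side. By \eqref{NormalOperator_Ray} the multiplier $M(\xi)$ of $N_m$ is, up to constants, the $2m$-fold $\xi$-gradient of $|\xi|^{2m-1}$, a matrix symbol homogeneous of degree $-1$; a direct computation shows that on the symmetric $m$-tensors transverse to $\xi$ it acts as $c\,|\xi|^{-1}$ times a \emph{fixed} invertible operator involving only traces, and $\mathcal C$ is chosen to invert precisely this operator. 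The operator $R$ contributes the Fourier factor $\alpha(i_1 j_1)\cdots\alpha(i_m j_m)\big[\,\cdot\,\xi_{j_1}\cdots\xi_{j_m}\big]$, and the point is that antisymmetrizing each free index against a $\xi$ both annihilates the potential (longitudinal) part and kills every symbol term carrying that index on a factor of $\xi$, so that $R\circ\mathcal C\circ M$ collapses to the scalar multiplier $c_{m,n}|\xi|^{-1}$ times the symbol of $R$. The hard part will be the bookkeeping of the trace terms of $M(\xi)$ for $m\ge 2$: these are absent when $m=1$ (there $\mathcal C$ is a scalar multiple of the identity and $R$ is the exterior derivative, so the identity reduces to the transparent commutation $N_0R=c\,RN_1$), and one must check that for general $m$ they conspire, through the invertibility of the fixed trace operator, to leave a genuinely \emph{polynomial} (hence local) multiplier.

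Granting this identity the proof closes at once: $R\circ\mathcal C$ is a constant-coefficient differential operator of order $m$, and by hypothesis $N_mf$ vanishes to infinite order at $x_0$, so $R(\mathcal C\,N_mf)$ and therefore $N_0g$ vanish to infinite order at $x_0$. Both hypotheses of \cite{Keijo_partial_function} now hold for $g$, whence $g\equiv 0$. As the chosen component was arbitrary, $Rf\equiv 0$ on $\Rb^n$, and the first paragraph completes the argument.
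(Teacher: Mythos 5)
Your outer architecture coincides with the paper's: reduce to the scalar unique continuation theorem of \cite{Keijo_partial_function} applied componentwise to $Rf$, with $Rf|_U=0$ supplying the local vanishing of each component $g=(Rf)_{i_1j_1\dots i_mj_m}$ near $x_0$, and an identity turning the infinite-order vanishing of $N_mf$ at $x_0$ into that of $N_0g$; then conclude via the equivalence of $R$ and $W$ in \eqref{RtoW}--\eqref{WtoR} and Sharafutdinov's characterization of $\ker W$. Two remarks on the periphery: \cite[Theorem 2.5.1]{Sharafutdinov_Book} already produces a compactly supported $v$ with support in the convex hull of $\operatorname{supp} f$, so your decay argument for the compact support of $v$, while salvageable, is unnecessary; and your route legitimately bypasses the paper's Lemma \ref{smoothnesslemma}, since $g|_U=0$ gives smoothness of $N_0g$ on $U$ by pseudolocality and the paper's definition of infinite-order vanishing already includes smoothness of $N_mf$ near $x_0$. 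Your conjectured identity also has exactly the right shape: the paper proves in Proposition \ref{key_lemma_RT} that $m!\,N_0(Rf)=\sum_{l=0}^{\lfloor m/2\rfloor}c_{l,m}\,R\big(i^lj^lN_mf\big)$, i.e.\ your $\mathcal{C}$ is $\frac{1}{m!}\sum_l c_{l,m}\,i^lj^l$, and granting the identity your closing steps are correct (the extension from $C_c^\infty$ to $\Ec'$, which you do not address, is handled in the paper by separate continuity of convolution, \cite[Theorem 27.6]{TrevesBook}, and would also follow from Paley--Wiener on your Fourier side).

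The genuine gap is that the identity is asserted rather than proved: the ``bookkeeping of the trace terms'' you defer with ``one must check'' is precisely the substantive content of the theorem. The paper does not argue on the Fourier side at all; it iterates the John operator relation \eqref{john_ray_relation} and integrates over $\Sn$, with the entire weight carried by Lemma \ref{IBPlemma}, an inductive spherical integration-by-parts formula that produces the exact constants $c_{l,s}$ of \eqref{constants} --- nothing in your sketch substitutes for that computation. Moreover, your guiding heuristic cannot be taken literally: the trace operator in the symbol of $N_m$ is built from the projection $\varepsilon(\xi)=\mathrm{Id}-|\xi|^{-2}\,\xi\otimes\xi$ and is therefore $\xi$-dependent, whereas $\mathcal{C}$ must be $\xi$-independent, so $\mathcal{C}$ cannot ``invert'' it pointwise; and the antisymmetrizations of $R$ do not kill individual trace terms, since for instance $\alpha(ij)\left[\delta_{ip}\,\xi_j\right]=\frac{1}{2}\left(\delta_{ip}\xi_j-\delta_{jp}\xi_i\right)\neq 0$ --- only pure $\xi_{i_k}\xi_{j_k}$ blocks die. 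The collapse to a single power $|\xi|^{-1}$ times the symbol of $R$ is thus a multi-term conspiracy across the whole $l$-sum, which is exactly what Lemma \ref{IBPlemma} organizes and what your proposal leaves open. The plan is a credible alternative proof of Proposition \ref{key_lemma_RT} (for $m=1$ it is indeed transparent), but as written the crux of the theorem is missing.
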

\begin{theorem}[UCP for ray transform II]\label{ucpnrt1}
	Let $U \subseteq \Rb^n$ be any non empty open set and $ n\ge 2$. Let $f \in \Ec'(\Rb^n;S^m)$ be such that $Rf = 0$ and $N_mf=0 $ in $ U$. Then $f$ is a potential field. %
\end{theorem}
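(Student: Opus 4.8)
The plan is to derive this statement as an immediate consequence of Theorem~\ref{ucp_nrt}. The key observation is that both hypotheses assumed here are formally stronger than the corresponding hypotheses of Theorem~\ref{ucp_nrt}: there one only requires $Rf|_U = 0$ together with infinite-order vanishing of $N_mf$ at a single point $x_0 \in U$, whereas here we are given $Rf = 0$ on all of $\Rb^n$ and $N_mf = 0$ on the whole of $U$. Consequently the entire mathematical content is carried by Theorem~\ref{ucp_nrt}, and the proof reduces to checking that its two hypotheses hold.

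First I would dispose of the Saint-Venant condition. Since $Rf = 0$ as a distribution on all of $\Rb^n$, its restriction to the open set $U$ is the zero distribution, so $Rf|_U = 0$; this is exactly the first hypothesis of Theorem~\ref{ucp_nrt}.

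Next I would verify the condition on the normal operator. As $U$ is non-empty, choose any point $x_0 \in U$. Because $U$ is open, it is a neighborhood of $x_0$, and by hypothesis $N_mf$ restricts to the zero distribution on $U$; in particular $N_mf$ agrees with the identically-zero smooth function on this neighborhood. Hence $N_mf$ is smooth near $x_0$ and all of its partial derivatives vanish there, which is precisely what it means for $N_mf$ to vanish to infinite order at $x_0$. Both hypotheses of Theorem~\ref{ucp_nrt} are therefore satisfied, and we conclude that there exists $v \in \Ec'(\Rb^n; S^{m-1})$ with $f = \D v$, i.e.\ $f$ is a potential field.

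Because the argument is a direct reduction, there is no substantive obstacle to overcome; the one point worth flagging is that the hypothesis $N_mf = 0$ in $U$ automatically supplies the local smoothness of $N_mf$ demanded by the definition of infinite-order vanishing, since any distribution vanishing on an open set is smooth there. One reason to record the present, logically weaker, statement separately is that its hypotheses are cleaner and symmetric, with both $Rf$ and $N_mf$ appearing as plain vanishing conditions on $U$.
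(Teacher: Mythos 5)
Your proposal is correct and follows exactly the route the paper takes: the paper simply declares Theorem~\ref{ucpnrt1} a trivial (direct) consequence of Theorem~\ref{ucp_nrt}, and your argument fills in the routine verification --- $Rf=0$ globally gives $Rf|_U=0$, and $N_mf=0$ on the open set $U$ means $N_mf$ coincides with the zero function there, hence is smooth near any $x_0\in U$ and vanishes to infinite order at $x_0$. Nothing further is needed.
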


\begin{corollary}[UCP for ray transform III]\label{ucp_rt}
	Let $U \subseteq \Rb^n$ be open. Let $f \in \Ec'(\Rb^n; S^m)$ be such that $Rf|_U = 0$ and the ray transform of $f$ vanishes on all lines passing through $U$, that is, $J_mf(x, \xi) = 0$ for $x \in U,\, \xi \in \Sn$. Then $f$ is a  potential field. 
\end{corollary}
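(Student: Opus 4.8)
The plan is to deduce this corollary from Theorem \ref{ucp_nrt} by showing that the vanishing of the ray transform along all lines meeting $U$ forces $N_m f$ to vanish on the open set $U$, and hence, a fortiori, to vanish to infinite order at any point of $U$. Since we are already given $Rf|_U = 0$, Theorem \ref{ucp_nrt} then applies verbatim. Note that Theorem \ref{ucpnrt1} is not the right tool here, as it requires $Rf = 0$ on all of $\Rb^n$, whereas the corollary only assumes $Rf|_U = 0$.

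First I would record the pointwise relation between the normal operator and $J_m$. Starting from $N_m = I_m^{*} I_m$ together with the explicit formula for $I_m^{*}$ given earlier, one has
\[
(N_m f)_{i_1 \cdots i_m}(x) = \int_{\Sb^{n-1}} \xi_{i_1} \cdots \xi_{i_m}\, (I_m f)(x - \langle x, \xi\rangle \xi, \xi)\, \Dxi.
\]
For each $\xi \in \Sb^{n-1}$ the point $x - \langle x,\xi\rangle \xi$ lies in $\xi^{\perp}$, so $(x - \langle x,\xi\rangle \xi, \xi)\in T\Sb^{n-1}$, and by the homogeneity relation linking $J_m$ and $I_m$ (specialized to $|\xi|=1$) we have $J_m f(x,\xi) = (I_m f)(x - \langle x,\xi\rangle \xi, \xi)$. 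Substituting yields the key identity
\[
(N_m f)_{i_1 \cdots i_m}(x) = \int_{\Sb^{n-1}} \xi_{i_1} \cdots \xi_{i_m}\, J_m f(x,\xi)\, \Dxi.
\]

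Next, for any $x \in U$ the hypothesis gives $J_m f(x,\xi) = 0$ for every $\xi \in \Sb^{n-1}$, so the integrand above vanishes identically and $N_m f(x) = 0$. Thus $N_m f \equiv 0$ on the open set $U$; in particular, for any fixed $x_0 \in U$ the field $N_m f$ is smooth (indeed zero) in a neighborhood of $x_0$ and all its derivatives vanish there, i.e. it vanishes to infinite order at $x_0$. Combining this with the assumption $Rf|_U = 0$, the hypotheses of Theorem \ref{ucp_nrt} are satisfied at $x_0 \in U$, and that theorem produces $v \in \Ec'(\Rb^n; S^{m-1})$ with $f = \D v$, completing the argument.

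The main point requiring care is the distributional bookkeeping, since $f \in \Ec'(\Rb^n; S^m)$ is only a compactly supported distribution: $J_m f$ and $N_m f$ are a priori distributions, so the hypothesis ``$J_m f(x,\xi) = 0$ for $x \in U$'' must be read as vanishing on the open set of lines through $U$, and the two displayed identities above must be justified by pairing against smooth test functions and invoking the definition of $N_m$ through $I_m^{*}$. Once this duality is in place, the pointwise computation above becomes the precise statement that $N_m f$ vanishes as a distribution on $U$, which is exactly the input needed for Theorem \ref{ucp_nrt}.
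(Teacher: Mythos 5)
Your proposal is correct and follows essentially the same approach as the paper's own proof: you use the homogeneity relation between $J_m$ and $I_m$ to conclude that $N_m f$ vanishes on $U$, so that it trivially vanishes to infinite order at any $x_0 \in U$, and then invoke Theorem \ref{ucp_nrt} together with the hypothesis $Rf|_U = 0$. (The paper also records an alternate, shorter proof via the John-operator identity and the scalar unique continuation result, but your route coincides with its primary argument.)
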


\begin{theorem}[UCP for momentum ray transform I]\label{mrt_ucp}
	Let $U \subseteq \Rb^n$. Let $f \in \Ec'(\Rb^n; S^m)$ be such that for some $0 \leq k \leq m$, $R^k f|_U = 0$. If $N_m^p f|_U = 0$ for all $0 \leq p \leq k$, then 
	 $f$ is a generalized potential field, that is, there exists a $v\in \Ec'(\Rb^n; S^{m-k-1})$ such that  $f= \D ^{k+1}v$.
	
\end{theorem}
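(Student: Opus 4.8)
The plan is to reduce the statement---exactly as in the proofs of Theorems~\ref{ucp_nrt} and~\ref{ucpnrt1}---to a unique continuation statement for the \emph{scalar} ray transform applied to the individual components of $R^k f$. The starting point is the equivalence of $R^k$ and the generalized Saint-Venant operator $W^k$ (established in Section~\ref{UCP:MRT}) together with the associated Poincar\'e-type lemma identifying the kernel of $R^k$ on $\Ec'(\Rb^n; S^m)$ with the range of $\D^{k+1}$. Granting this, the conclusion $f = \D^{k+1} v$ is equivalent to $R^k f \equiv 0$ on all of $\Rb^n$, so the whole content of the theorem is the propagation of the vanishing of $R^k f$ from $U$ to $\Rb^n$.

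To carry this out I would fix an arbitrary scalar component $h := (R^k f)_{p_1 q_1 \cdots p_{m-k} q_{m-k} i_1 \cdots i_k} \in \Ec'(\Rb^n)$. By the hypothesis $R^k f|_U = 0$ we have $h|_U = 0$, and it then suffices to show that the scalar normal operator $N_0 h$ vanishes to infinite order at some point $x_0 \in U$: indeed, the scalar unique continuation result---which is precisely Theorem~\ref{ucp_nrt} in the case $m = 0$, where $R$ reduces to the identity and the conclusion reads $h \equiv 0$---then forces $h \equiv 0$. Since the component is arbitrary, this gives $R^k f \equiv 0$ and hence $f = \D^{k+1} v$ with $v \in \Ec'(\Rb^n; S^{m-k-1})$.

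The analytic heart, and the step I expect to be the main obstacle, is the identity producing $N_0 h$ from the momentum normal data. On the Fourier side the convolution kernels in~\eqref{nmrt} are Riesz-type multipliers homogeneous of degree $-1$ (the same homogeneity as $N_0 \simeq (-\Delta)^{-1/2}$) times rational symbols of degree $0$ built from $\xi/|\xi|$; because $R^k$, like every $N_m^p$, annihilates the generalized potential fields, its polynomial symbol factors through these degree-$0$ symbols, and composing with $(-\Delta)^{1/2}$ to cancel the homogeneous factor should yield an expression
\[
N_0 h = \sum_{p=0}^{k} \Pc_p\big(N_m^p f\big),
\]
in which each $\Pc_p$ is a constant-coefficient differential operator in $x$ acting on the components of $N_m^p f$. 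The difficulty is that for $p \geq 1$ the operators $N_m^p$ are \emph{not} translation invariant: \eqref{nmrt} multiplies the convolutions by the polynomials $x^{\odot(2k-l)}$, so on the Fourier side these weights become $\xi$-derivatives landing on the degree-$(-1)$ multipliers and generating spurious lower homogeneities. Sorting out and cancelling these contributions is exactly why the hypothesis furnishes $N_m^p f$ for \emph{all} $0 \le p \le k$ rather than for $p = k$ alone. Once the identity is available, the hypothesis $N_m^p f|_U = 0$ for $0 \le p \le k$ shows that its right-hand side, being a differential operator applied to components vanishing on the open set $U$, vanishes on $U$; in particular $N_0 h$ vanishes to infinite order at every $x_0 \in U$, which is the remaining input needed above.
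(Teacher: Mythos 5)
Your global scheme is the same as the paper's: reduce component-wise to the scalar UCP of \cite[Theorem 1.1]{Keijo_partial_function} applied to $h=(R^kf)_{p_1q_1\dots p_{m-k}q_{m-k}i_1\dots i_k}$, conclude $R^kf\equiv 0$ on $\Rb^n$, and then pass to $f=\D^{k+1}v$ via the $R^k$--$W^k$ equivalence (Lemma \ref{rkwk_equivalence}) together with \cite[Theorem 2.17.2]{Sharafutdinov_Book}; that endgame is correct as you set it up. The genuine gap is precisely the step you yourself flag as the ``analytic heart'': the identity $N_0h=\sum_{p=0}^{k}\Pc_p(N_m^pf)$ is only conjectured, with a Fourier-side heuristic (``its polynomial symbol factors through these degree-$0$ symbols \dots should yield'') in place of a proof. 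Moreover the claimed form is not quite what is true: in the paper's identity (Proposition \ref{MRT:Prop}, equation \eqref{key_equation_mrt}) the operators hitting the momentum normal data are \emph{not} constant-coefficient; they involve contractions $j_{x^{\odot r-p}}$ with powers of $x$, iterated divergences $\delta^p$, the order-$(m-k)$ operator $R^k$, and outer $x$-derivatives. The polynomial coefficients are harmless for the restriction-to-$U$ step (any smooth-coefficient differential operator applied to a tensor vanishing on the open set $U$ vanishes on $U$), but they show that the constant-coefficient multiplier picture cannot be literally right, and the promised cancellation of the ``spurious lower homogeneities'' produced by the weights $x^{\odot 2k-l}$ in \eqref{nmrt} is exactly the part left uncontrolled.

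For comparison, the paper derives the identity on the physical side rather than the Fourier side: iterate the John operator relation \eqref{john_ray_relation} for the restricted tensor $f^{i_1\dots i_k}$, substitute the formula of \cite[Lemma 4.2]{SumanSIAM} expressing $J^0_{m-k}f^{i_1\dots i_k}$ through mixed $x$- and $\xi$-derivatives of $J_m^rf$, $0\le r\le k$, integrate over $\Sn$, remove the $\xi$-derivatives with the spherical integration-by-parts Lemma \ref{IBPlemma}, and finally invoke Lemma \ref{key_lemma_mrt1}, which converts $\int_{\Sn}\xi_{i_1}\cdots\xi_{i_{m-k}}(J_m^kf)(x,\xi)\,\Dxi$ into $\sum_{r=0}^{k}(-1)^{k-r}\frac{1}{r!}\binom{k}{r}\bigl(j_{x^{\odot k-r}}\delta^rN_m^rf\bigr)_{i_1\cdots i_{m-k}}$, proved by induction using \eqref{div_nmrt} and the shift relation \eqref{homo_mrt2}. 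It is this last lemma, not a symbol cancellation, that realizes your correct intuition that the hypothesis must supply $N_m^pf|_U=0$ for \emph{all} $0\le p\le k$. (A minor further point: the identity is proved for $f\in C_c^\infty(\Rb^n;S^m)$ and extended to $\Ec'(\Rb^n;S^m)$ by density and separate continuity of convolution, as in Proposition \ref{key_lemma_RT}.) Without an actual proof of an identity of this type, your argument does not close; everything surrounding it matches the paper.
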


\begin{theorem}[UCP for momentum ray transform II]\label{mrt_ucp_1}
	Let $U \subseteq \Rb^n$ and $f \in \Ec'(\Rb^n; S^m)$. Suppose for some $0 \leq k \leq m$, $R^k f|_U = 0$ and $J^k_{m} f(x,\xi) = 0$ for all $(x,\xi)\in U\times \mathbb{S}^{n-1}$, then $f$ is a generalized potential tensor field.   
\end{theorem}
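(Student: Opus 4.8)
The plan is to reduce Theorem \ref{mrt_ucp_1} to the already-established Theorem \ref{mrt_ucp}. The only gap between the two statements is the hypothesis: Theorem \ref{mrt_ucp} asks for $N_m^p f|_U=0$ for all $0\le p\le k$, whereas here we are only given that the single moment $J_m^k f$ vanishes on all lines through $U$, i.e.\ $J_m^k f(x,\xi)=0$ for $(x,\xi)\in U\times\Sn$ (together with the common assumption $R^k f|_U=0$). So the whole task is to deduce $N_m^p f|_U=0$ for every $0\le p\le k$ from the vanishing of $J_m^k f$; once this is done, Theorem \ref{mrt_ucp} immediately produces a $v\in\Ec'(\Rb^n;S^{m-k-1})$ with $f=\D^{k+1}v$, so that $f$ is a generalized potential field.

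The first step is to upgrade the vanishing of the top moment $J_m^k f$ on $U\times\Sn$ to the vanishing of \emph{all} lower moments $J_m^l f$, $0\le l\le k$, on $U\times\Sn$. The tool is the translation identity \eqref{homo_mrt2}. Fixing $\xi\in\Sn$ and writing $F(x)=J_m^k f(x,\xi)$, that identity reads $F(x+s\xi)=\sum_{l=0}^k\binom{k}{l}(-s)^{k-l}J_m^l f(x,\xi)$, a polynomial in $s$ whose coefficients are the lower moments at the base point $x$. Differentiating $j$ times in $s$ and setting $s=0$ gives $(\xi\cdot\nabla_x)^j F(x)=\binom{k}{j}(-1)^j\,j!\,J_m^{k-j}f(x,\xi)$. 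Since $F$ vanishes on the open set $U$, so do all its $x$-derivatives there (equivalently, a polynomial in $s$ vanishing on an open $s$-interval vanishes identically, forcing all coefficients to be zero); hence $J_m^l f(x,\xi)=0$ for every $0\le l\le k$ and every $x\in U$, $\xi\in\Sn$.

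The second step converts this into vanishing of the normal operators. For $0\le p\le k$ and $x\in U$, the representation \eqref{Normal} writes $(N_m^p f)(x)$ as an integral over $\Sn$ of $J_m^p f$ evaluated at the foot of the perpendicular $x-\langle x,\xi\rangle\xi$. This point generally lies outside $U$, but applying \eqref{homo_mrt2} once more with $s=-\langle x,\xi\rangle$ rewrites the integrand value as $\sum_{l=0}^p\binom{p}{l}\langle x,\xi\rangle^{p-l}J_m^l f(x,\xi)$, in which every term vanishes by the first step (since $x\in U$ and $l\le p\le k$). Hence the integrand is identically zero, so $N_m^p f|_U=0$ for all $0\le p\le k$. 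Combining this with the given $R^k f|_U=0$ and invoking Theorem \ref{mrt_ucp} completes the proof.

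The main obstacle I expect is bookkeeping the distributional meaning of these pointwise manipulations: for $f\in\Ec'(\Rb^n;S^m)$ the moments $J_m^l f$ and the normal operators $N_m^p f$ are a priori distributions, so both the differentiation of the translation identity and the substitution into \eqref{Normal} must be read as identities of distributions on $\Rb^n\times\Sn$ (respectively on $U$), using that the tangential operator $\xi\cdot\nabla_x$ acts on these objects and that the vanishing of a distribution on the open set $U\times\Sn$ forces the vanishing of its derivatives there. This parallels the distributional extensions already set up in Section \ref{PSM}; once it is in place, the entire content of the argument is the two algebraic applications of \eqref{homo_mrt2} described above.
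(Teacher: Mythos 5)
Your proposal is correct and takes essentially the same route as the paper: the paper also deduces that $J_m^l f$ vanishes on $U\times\Sb^{n-1}$ for all $l<k$ (by citing \cite[Lemma 4.8]{BKS}, whose content is precisely your differentiation of the translation identity \eqref{homo_mrt2}), concludes $N_m^p f|_U=0$ for $0\le p\le k$, and invokes Theorem \ref{mrt_ucp}. The only difference is that you inline the cited lemma and spell out the passage from vanishing moments to vanishing normal operators via \eqref{Normal} and \eqref{homo_mrt2}, a step the paper leaves implicit (its preliminaries already contain the needed formula expressing $N_m^k f(x)$ through $J_m^l f(x,\xi)$, $l\le k$).
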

In all the results above the support of $v$ is contained in the convex hull of the support of $f$.
\begin{theorem}[UCP for transverse ray transform]\label{ucp_trt}
	Let $ n\ge 3$ and $f\in \Ec'(\Rb^n; S^m)$. 
	Assume that $\Tc f=0$ along all the lines intersecting a non-empty open set $U$ and $f=0$ in $U$. Then $f\equiv 0$. 
\end{theorem}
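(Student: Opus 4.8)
The plan is to reduce the entire statement to the unique continuation property for the \emph{scalar} ray transform, i.e.\ the $m=0$ case of Corollary \ref{ucp_rt} (originally due to Ilmavirta and M\"onkk\"onen \cite{Keijo_partial_function}): if a compactly supported scalar distribution vanishes on a non-empty open set $V\subseteq\Rb^{N}$ with $N\geq 2$ and its ray transform vanishes on every line meeting $V$, then it vanishes identically. First I would dispose of the distributional setting by a standard mollification: replacing $f$ by $f_\varepsilon=f*\phi_\varepsilon$ with $\phi_\varepsilon$ a radial approximate identity supported in a ball of radius $\varepsilon$, one checks that $f_\varepsilon\in C_c^\infty(\Rb^n;S^m)$ still vanishes on the shrunken open set $U_\varepsilon=\{x: B(x,\varepsilon)\subseteq U\}$ and that $\Tc f_\varepsilon$ vanishes on every line meeting $U_\varepsilon$; since $U_\varepsilon\ne\emptyset$ for small $\varepsilon$ and $f_\varepsilon\to f$ in $\Ec'$, it suffices to prove the theorem for smooth, compactly supported $f$, which I assume henceforth so that pointwise values make sense.

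The key idea is a \emph{slicing} reduction. Fix a unit vector $y\in\Sn$ and set $g_y(x)=\langle f(x),y^{m}\rangle=f_{i_1\cdots i_m}(x)\,y^{i_1}\cdots y^{i_m}$, a smooth compactly supported scalar function. For a direction $\omega\perp y$ and the line through $x$ in direction $\omega$ one has $\int_\Rb g_y(x+t\omega)\,dt=\Tc f(\omega,x,y)$, so the transverse data gives the \emph{scalar} ray transform of $g_y$ along every direction $\omega$ \emph{orthogonal to $y$}. These directions are not open in $\Sn$, so the $n$-dimensional scalar UCP does not apply to $g_y$ directly; however, since $\langle x+t\omega,y\rangle=\langle x,y\rangle$ when $\omega\perp y$, the lines in directions $\omega\perp y$ foliate $\Rb^n$ into the hyperplanes $\Pi_c=\{x:\langle x,y\rangle=c\}$, and the directions $\omega\perp y$ are \emph{exactly} all directions inside each $\Pi_c\cong\Rb^{n-1}$. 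Thus, for every $c$ with $\Pi_c\cap U\neq\emptyset$, the restriction $g_y|_{\Pi_c}$ is a smooth compactly supported function on $\Rb^{n-1}$ whose full $(n-1)$-dimensional ray transform vanishes on all lines meeting the open set $\Pi_c\cap U$, and which itself vanishes on $\Pi_c\cap U$. Applying the scalar UCP in dimension $n-1$ --- this is where the hypothesis $n\geq 3$ (so that $n-1\geq 2$) is essential --- gives $g_y|_{\Pi_c}\equiv 0$. Letting $c$ range over the open projection $\pi_y(U)=\{\langle u,y\rangle:u\in U\}$ yields $g_y\equiv 0$ on the open slab $S_y=\{x:\langle x,y\rangle\in\pi_y(U)\}$.

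It remains to stitch these conclusions together. Fix an arbitrary $x_0\in\Rb^n$, choose $u_0\in U$ with $u_0\ne x_0$, and fix a ball $B(u_0,\rho)\subseteq U$. For a unit vector $y_\ast\perp(x_0-u_0)$ we have $\langle x_0-u_0,y_\ast\rangle=0$; since $\langle B(u_0,\rho),y\rangle\subseteq\pi_y(U)$ for every $y\in\Sn$, the point $\langle x_0,y\rangle$ lies in $\pi_y(U)$ whenever $|\langle x_0-u_0,y\rangle|<\rho$, which by continuity holds for all $y$ in some open neighbourhood $V_{x_0}\subseteq\Sn$ of $y_\ast$. Thus $x_0\in S_y$, and hence $g_y(x_0)=0$, for every $y\in V_{x_0}$. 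Since $y\mapsto\langle f(x_0),y^{m}\rangle$ is the restriction to $\Sn$ of a homogeneous polynomial of degree $m$, its vanishing on the non-empty open set $V_{x_0}$ forces it to vanish identically in $y$, so the symmetric tensor $f(x_0)=0$ by polarization. As $x_0$ was arbitrary, $f\equiv 0$, which after undoing the mollification proves the theorem.

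The step I expect to be the crux is the slicing reduction: one must recognise that although fixing the contraction direction $y$ discards an open set of line directions, the surviving directions $\omega\perp y$ form precisely the full set of directions inside the hyperplanes orthogonal to $y$, which is exactly what turns the transverse data into honest $(n-1)$-dimensional scalar ray-transform data on each slice. This is the mechanism through which the hypothesis $n\geq 3$ enters (one needs $n-1\geq 2$ for the scalar UCP), and it explains why the conclusion is the full statement $f\equiv 0$ rather than merely ``$f$ is a potential field'': since every contraction direction $y$ is exploited, no longitudinal part of $f$ is lost, in contrast with the usual ray transform. The remaining matters --- the distributional bookkeeping in the mollification step and the persistence of the slab condition under small perturbations of $y$ in the covering argument --- are routine.
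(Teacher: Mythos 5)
Your proposal is correct and takes essentially the same approach as the paper: fix the contraction direction, observe that the transverse data supplies the full $(n-1)$-dimensional scalar ray transform of $\langle f, y^{\odot m}\rangle$ on each hyperplane orthogonal to $y$, apply the scalar UCP of Ilmavirta--M\"onkk\"onen on those slices (which is exactly where $n\ge 3$ enters), and then vary the contraction direction over an open set of directions to conclude $f(x_0)=0$ pointwise. The only cosmetic differences are that you handle the distributional setting by explicit mollification and finish via the vanishing of a homogeneous polynomial on an open subset of $\Sb^{n-1}$ plus polarization, whereas the paper works with the distribution directly and concludes from the linear independence of the symmetrized products $\eta_{i_1}\odot\cdots\odot\eta_{i_m}$ of $n$ linearly independent vectors drawn from an open cone of admissible directions --- two equivalent ways of making the same final step, with your write-up of the covering argument (which hyperplanes through $x_0$ actually meet $U$) being, if anything, more careful than the paper's sketch.
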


\section{ucp for the ray transform}\label{UCP:Ray}



In this section, we prove unique continuation properties for the ray transforms. We prove Theorem \ref{ucp_nrt}.  We first show that $N_mf$ is smooth in $U$ if $Rf|_U = 0$.

\begin{lemma}\label{smoothnesslemma}
    Let $f \in \Ec'(\Rb^n; S^m)$ be such that $Rf|_{U} = 0$ for some open set $U \subseteq \Rb^n$. Then,  $N_mf|_U$ is smooth.
\end{lemma}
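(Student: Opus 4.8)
The plan is to reduce the smoothness of $N_m f$ near each point of $U$ to the elementary fact that the convolution kernel in \eqref{NormalOperator_Ray} is smooth away from the origin, after peeling off a genuine potential field on which $N_m$ vanishes identically. The two facts I would isolate at the outset are: (i) $N_m(\D v)=0$ for every $v\in\Ec'(\Rb^n;S^{m-1})$, because $I_m(\D v)=0$ --- along any line $\langle (\D v)(x+t\xi),\xi^{m}\rangle=\frac{\D}{\D t}\langle v(x+t\xi),\xi^{m-1}\rangle$ integrates to zero for compactly supported $v$, whence $N_m(\D v)=I_m^*I_m(\D v)=0$; and (ii) the matrix-valued kernel $K(x)=2\,(x^{\odot 2m})/\lVert x\rVert^{2m+n-1}$ appearing in \eqref{NormalOperator_Ray} is locally integrable and smooth on $\Rb^n\setminus\{0\}$, so $\operatorname{sing\,supp}(K)=\{0\}$.

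Fix $x_0\in U$ and a ball $B=B(x_0,r)\ssubset U$. Since $Rf|_U=0$, the equivalence \eqref{RtoW}--\eqref{WtoR} gives $Wf=0$ on $B$, i.e. $f$ satisfies the Saint-Venant compatibility conditions on $B$. The next step is to invoke the Poincar\'e lemma for the Saint-Venant operator (Saint-Venant's theorem, \cite{Sharafutdinov_Book}): on the ball $B$ there exists $w\in\Dc'(B;S^{m-1})$ with $f=\D w$ on $B$. I would then choose a cutoff $\chi\in C_c^\infty(B)$ with $\chi\equiv 1$ on a neighborhood $V$ of $x_0$ and set $v:=\chi w\in\Ec'(\Rb^n;S^{m-1})$. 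On $V$ we have $\D v=\D w=f$, so the compactly supported distribution $g:=f-\D v$ vanishes on $V$.

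Finally I would split $N_m f=N_m(\D v)+N_m g=N_m g$, using (i). By \eqref{NormalOperator_Ray}, $N_m g=g*K$, and since the singular support of a convolution satisfies $\operatorname{sing\,supp}(g*K)\subseteq \operatorname{sing\,supp}(g)+\operatorname{sing\,supp}(K)=\operatorname{sing\,supp}(g)$ by (ii), while $x_0\notin\operatorname{sing\,supp}(g)$ because $g\equiv0$ on $V$, we conclude $x_0\notin\operatorname{sing\,supp}(N_m f)$. As $x_0\in U$ was arbitrary, $N_m f|_U$ is smooth.

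The step I expect to be the main obstacle --- and the one I would write out carefully --- is the distributional Poincar\'e lemma for the Saint-Venant operator, namely that $Wf=0$ on a ball forces $f=\D w$ there when $f$ is only a distribution. For smooth $f$ this is classical; for distributions one can either quote the exactness of the corresponding constant-coefficient differential complex, or reduce to the smooth case by mollification: $f_\varepsilon=f*\rho_\varepsilon$ satisfies $Wf_\varepsilon=0$ on a slightly smaller ball, yields $f_\varepsilon=\D w_\varepsilon$ there, and one passes to the limit. One should also check that the local primitive $w$ can be taken so that $\chi w$ is a legitimate element of $\Ec'(\Rb^n;S^{m-1})$, which is immediate once $w\in\Dc'(B;S^{m-1})$ since $\chi\in C_c^\infty(B)$.
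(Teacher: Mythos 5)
Your overall architecture --- peel off a potential field, use $N_m\circ\D=0$ on $\Ec'(\Rb^n;S^{m-1})$, and finish with pseudolocality of the convolution with the kernel in \eqref{NormalOperator_Ray} --- is sound, and your steps (i) and (ii) are correct as stated. The genuine gap is exactly where you flag it: the distributional local Saint-Venant lemma, i.e.\ that $Wf=0$ on a ball $B$ forces $f=\D w$ with $w\in\Dc'(B;S^{m-1})$ when $f$ is merely a distribution. Neither of your two proposed repairs is complete. The mollification route fails as sketched: the primitives $w_\varepsilon$ of $f_\varepsilon=f*\rho_\varepsilon$ are determined only modulo the finite-dimensional kernel of $\D$ (polynomial Killing-type fields of degree at most $m-1$), and you give no normalization together with bounds that would make a choice of $w_\varepsilon$ converge in $\Dc'$; the convergence $f_\varepsilon\to f$ in $\Dc'$ does not by itself yield convergent primitives, since the smooth solution operator $f\mapsto w$ is continuous in the $C^\infty$ topology of the data, not with respect to distributional convergence. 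The route via exactness of the constant-coefficient complex is closer to working, but it needs a nontrivial algebraic ingredient you do not supply: Ehrenpreis--Palamodov exactness over convex sets in $\Dc'$ applies to the \emph{full} compatibility operator of $\D$, so one must know that the rows of $W$ generate the entire syzygy module of relations of $\D$ --- a statement strictly stronger than the smooth kernel identity $\ker W\cap C^\infty=\D\, C^\infty$ on simply connected sets that you would be quoting from \cite{Sharafutdinov_Book}. This can be extracted from the literature on Calabi/BGG-type resolutions, but as written your proof rests on an unproved lemma.

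For comparison, the paper's proof of Lemma \ref{smoothnesslemma} is engineered to avoid precisely this local problem. It invokes the constant-coefficient identity $\Delta^m({}^s\!f)=2^m\delta_e^m Rf$ of \cite{Denisjuk_Paper}, which holds verbatim for $f\in\Ec'(\Rb^n;S^m)$, so $Rf|_U=0$ gives $\Delta^m({}^s\!f)=0$ in $U$; Weyl's lemma (hypoellipticity of $\Delta^m$) then makes the solenoidal part ${}^s\!f$ smooth in $U$, and the identity $N_mf=N_m{}^s\!f$ together with the same pseudolocality you use concludes. In effect, the paper replaces your locally constructed potential by the global solenoidal--potential decomposition, for which the needed identity is already available at the distributional level, whereas your scheme forces you to solve $\D w=f$ locally for a distribution --- the one step that does not come for free. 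The cheapest repair of your argument is to swap the local Saint-Venant step for this identity (equivalently, subtract the global potential component $\D v$ of the solenoidal decomposition rather than a cutoff local primitive); everything else in your proposal then goes through unchanged.
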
 
\begin{proof}
    We use the following formula proved in \cite[Theorem 2.5]{Denisjuk_Paper} for $f\in \Sc(\Rb^n; S^m)$: 
    \begin{equation}\label{smoothness}
        \Delta^m (^sf) = 2^m \delta_e^m Rf,
    \end{equation}
    where $\delta_e$ is the even indices divergence operator \cite[(2.4)]{Denisjuk_Paper} and $^s\!f$ is the solenoidal component of $f$. In \eqref{smoothness}, the formula is interpreted componentwise. The same proof works for  $f \in \Ec'(\Rb^n;S^m)$ as well. By hypothesis, the right hand side  of \eqref{smoothness} is $0$ in $U$, and hence $\Delta^m \left( ^s\!f \right)=0$. By Weyl's Lemma \cite{Weyl_lemma}, $^s\!f$ is smooth in $U$, and hence so is $N_m \,^s\!f$.  Since $N_m f = N_m ^s f$;   see \cite{Sharafutdinov_Book}, we are done.
\end{proof}





Now we will only prove Theorem~\ref{ucp_nrt}. Theorem~\ref{ucpnrt1} is a trivial consequence. The idea of proof is to reduce the problem to that of the functions case as in \cite{Keijo_partial_vector_field}.  The following proposition serves as the key ingredient.
\begin{proposition}\label{key_lemma_RT}
    For $f \in \Ec'(\Rb^n; S^m)$, the following equality holds: 
    \begin{equation}\label{key_equation_RT}
            m!\, N_0(Rf)_{i_1 j_1 \dots i_m j_m} = \sum_{l=0}^{\lfloor\frac{m}{2}\rfloor} c_{l,m} (R(i^l j^l N_mf))_{i_1 j_1 \dots i_m j_m},
    \end{equation}
  where $ \lfloor x \rfloor$ denotes the greatest integer function $\leq x$ and the constants $c_{l,m}$ are given by
    \begin{equation*}
        c_{l,m} = \left \{\prod_{p=0}^{m-l-1} (n-1+2p)\right \} \frac{(-1)^l \, m!}{2^l\, l! \,  (m-2l)!}.
    \end{equation*}
\end{proposition}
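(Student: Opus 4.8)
The plan is to reduce \eqref{key_equation_RT} to an identity between the explicit convolution kernels of $N_0$ and $N_m$, and then to a single computation of the $m$-th order derivatives of the Riesz kernel $\Phi(x)=\|x\|^{1-n}$. Setting $m=0$ in \eqref{NormalOperator_Ray} gives $N_0 g = 2\,g*\Phi$ for a scalar distribution $g$, while for general $m$ the kernel is $G_{a_1\cdots a_m i_1\cdots i_m}(x)=x_{a_1}\cdots x_{a_m}x_{i_1}\cdots x_{i_m}\|x\|^{-(2m+n-1)}$. Since the operator $R$ of \eqref{RSV} together with the contraction and multiplication operators $j,i$ of \eqref{jop}--\eqref{ioperator} are constant-coefficient (differential or purely algebraic) operations commuting with convolution, I would move every derivative onto the kernels. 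The left-hand side then becomes
\[
m!\,N_0(Rf)_{i_1 j_1\cdots i_m j_m}=2\,m!\,\alpha(i_1 j_1)\cdots\alpha(i_m j_m)\left[f_{i_1\cdots i_m}*\partial_{j_1}\cdots\partial_{j_m}\Phi\right],
\]
and each summand on the right-hand side likewise becomes $\alpha(i_1 j_1)\cdots\alpha(i_m j_m)$ applied to $f$ convolved with $\partial_{j_1}\cdots\partial_{j_m}$ of the kernel obtained from $G$ by $l$ metric contractions and re-insertions. Thus it suffices to establish a single identity at the level of kernels.

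The key computation is the closed form of $\partial_{j_1}\cdots\partial_{j_m}\Phi$. Arguing by induction on $m$, where each radial derivative of $\|x\|^{-s}$ contributes the chain factor $-s$ and lowers the exponent by two, I would establish
\[
\partial_{j_1}\cdots\partial_{j_m}\|x\|^{1-n}=\sum_{l=0}^{\lfloor m/2\rfloor}(-1)^{m-l}\left[\prod_{p=0}^{m-l-1}(n-1+2p)\right]\frac{m!}{2^l\,l!\,(m-2l)!}\,\frac{\sigma(j_1\cdots j_m)\left[\delta_{j_1 j_2}\cdots\delta_{j_{2l-1}j_{2l}}\,x_{j_{2l+1}}\cdots x_{j_m}\right]}{\|x\|^{\,n-1+2(m-l)}},
\]
with the inner bracket symmetrized over $j_1,\dots,j_m$. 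Here $\frac{m!}{2^l\,l!\,(m-2l)!}$ counts the ways to group $2l$ of the indices into $l$ unordered pairs, while $\prod_{p=0}^{m-l-1}(n-1+2p)$ is the accumulated radial factor; the summand carrying $l$ Kronecker deltas is precisely the one destined to pair with the $l$-th term on the right-hand side (one checks $m=1,2$ directly to fix this form).

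It then remains to match terms, and here the decisive simplification is that $\alpha(i_k j_k)$ annihilates any summand symmetric under $i_k\leftrightarrow j_k$. Consequently, when one differentiates the tensor kernel $G$ (or its traces $i^l j^l G$) and applies $\alpha(i_1 j_1)\cdots\alpha(i_m j_m)$, every factor $\delta_{j_k i_k}$ or $\delta_{j_k a}$ produced by a derivative hitting an $x$ in the same alternated pair is killed, and only the configurations indexed by the contraction parameter $l$ survive. Computing $R(i^l j^l N_m f)$ in this way, and comparing with the $l$-th term of the displayed expansion substituted into the left-hand side, one reads off that the two coincide with the stated coefficient $c_{l,m}=\left\{\prod_{p=0}^{m-l-1}(n-1+2p)\right\}\frac{(-1)^l\,m!}{2^l\,l!\,(m-2l)!}$; the overall sign is pinned down by the antisymmetrization, and the base cases $m=1$ (and $m=2$ for the trace coefficients) fix all constants.

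The hard part will be this last step: tracking how the symmetrization $\sigma$ introduced by the operator $i^l$ and the $m$ alternations $\alpha(i_k j_k)$ interact with the mixed $\delta$/$x$ structure of the differentiated kernels, so that exactly $c_{l,m}$ (with the correct sign and the full product $\prod_{p=0}^{m-l-1}(n-1+2p)$) emerges for every $l$ simultaneously. This is combinatorial bookkeeping rather than analytic subtlety, and it is where sign or constant errors are most likely; I would guard against them by verifying \eqref{key_equation_RT} independently in the cases $m=1$ and $m=2$ before asserting the general coefficient.
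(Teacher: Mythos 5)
Your strategy---push everything onto the convolution kernels via \eqref{NormalOperator_Ray}, compute $\partial_{j_1}\cdots\partial_{j_m}|x|^{1-n}$ in closed form, and match terms after alternation---is a genuinely different route from the paper's. The paper never differentiates the kernels: it iterates the John operator identity $(-2)^m m!\, I_0(Rf)=J_{i_1j_1}\cdots J_{i_mj_m}(J_mf)$, integrates over $\Sb^{n-1}$, and converts $\int_{\Sn}\frac{\partial^m J_mf}{\partial\xi_{j_1}\cdots\partial\xi_{j_m}}\,\Dxi$ into $\sum_l c_{l,m}\int_{\Sn} i^lj^l(\xi^{\odot m})_{j_1\cdots j_m}\,J_mf\,\Dxi$ by an inductive integration-by-parts lemma on the sphere (Lemma \ref{IBPlemma}), then passes to $\Ec'$ by density and separate continuity of convolution. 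Your closed form for the derivatives of the Riesz kernel is correct (it checks out at $m=1,2$, and your sign $(-1)^{m-l}$ is consistent with $c_{l,m}$ once the antisymmetry of $R$ is used, exactly as the paper absorbs the $(-1)^m$ coming from the John identity); in substance it is the physical-space twin of Lemma \ref{IBPlemma}, the two being related by polar coordinates and the divergence theorem (Lemma \ref{homogenous_property}). One genuine advantage of your route, if completed, is that it would apply directly to $f\in\Ec'(\Rb^n;S^m)$ with no density step, since \eqref{NormalOperator_Ray} already holds distributionally.

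However, there is a concrete flaw in the matching step as you describe it: the claim that every factor $\delta_{j_k a}$ is annihilated by the alternations is false, and it is precisely these terms---a derivative hitting a factor $x_a$ contracted against $f$---that survive and carry the identity. Already for $m=1$ one has $\alpha(ij)\,\partial_j\bigl(x_a x_i |x|^{-n-1}\bigr)=\tfrac12\bigl(\delta_{aj}x_i-\delta_{ai}x_j\bigr)|x|^{-n-1}$: the surviving term is exactly of the type you propose to discard, while only $\delta_{j_ki_k}$ (partner within the same alternated pair) and the fully symmetric pieces die. Consequently the comparison between $\sum_l c_{l,m}R(i^lj^lN_mf)$ and the expansion of $\partial_{j_1}\cdots\partial_{j_m}\Phi$ is not a term-by-term pairing indexed by $l$; the cross terms with $\delta_{j_ka}$ factors must recombine across different values of $l$, and this recombination---the actual content of the proposition---is asserted rather than proved in your proposal. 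Expect an induction on $m$ of roughly the same weight as the paper's Lemma \ref{IBPlemma}.

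There is also an analytic point you dismiss as bookkeeping but which needs an argument: for $m\ge 1$ all the differentiated kernels are homogeneous of degree $1-n-m\le -n$, hence not locally integrable, so a pointwise identity on $\Rb^n\setminus\{0\}$ does not by itself give an identity of tempered distributions---a priori the two sides could differ by a combination of $\partial^\beta\delta_0$ with $|\beta|=m-1$, which after convolution with $f$ contributes genuine local terms. This is fixable: both sides are distributional derivatives of locally integrable homogeneous kernels, hence homogeneous distributions of degree $1-n-m$ agreeing away from the origin, and any difference supported at $\{0\}$ would have parity $(-1)^{m-1}$ while both sides have parity $(-1)^m$, so it vanishes. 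Either include this argument, or follow the paper: prove the identity for $f\in C_c^{\infty}(\Rb^n;S^m)$ and extend to $\Ec'$ by density.
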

\begin{remark}
    An inversion formula recovering the Saint-Venant operator of $f$ from its ray transform is proved in \cite[Theorem 2.12.3]{Sharafutdinov_Book}. From \eqref{key_equation_RT}, one can derive this inversion formula. However, our approach here is  different and in our opinion simpler than that of \cite[Theorem 2.12.3]{Sharafutdinov_Book}. 
\end{remark}
To prove this proposition we first prove a technical result, see Lemma \ref{IBPlemma},  which will be used in the next two sections. We first recall the definition of a positive homogeneous function.

A function $g$ is called \emph{positive homogeneous of degree $\lambda$} if 
\begin{equation*}
    g(rx) = r^\lambda g(x)
\end{equation*}
for all $x \in \mathbb{R}^n$, and $r > 0$.

The following lemma was recently used in \cite{krishnan2021ray} as well. 

\begin{lemma}\label{homogenous_property}
If $g$ is smooth and positive homogeneous of degree $\lambda$ such that $n+ \lambda > 0$, then
\begin{equation*}
    \int\limits_{\Sn} g(\xi)\, \D \mathrm{S}_\xi  = (n+\lambda) \int\limits_{|\xi|\leq 1} g(\xi)\, \D \xi.
\end{equation*}
\end{lemma}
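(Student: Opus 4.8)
The plan is to pass to spherical (polar) coordinates and exploit the homogeneity to reduce everything to an elementary radial integral. Writing $\xi = r\omega$ with $r = |\xi| \geq 0$ and $\omega \in \Sn$, the Lebesgue measure on $\Rb^n$ factors as $\D \xi = r^{n-1}\, \D r\, \D \mathrm{S}_\omega$, where $\D \mathrm{S}_\omega$ denotes the surface measure on $\Sn$. Substituting this into the volume integral over the unit ball gives
\[
\int\limits_{|\xi|\leq 1} g(\xi)\, \D \xi = \int\limits_0^1 \int\limits_{\Sn} g(r\omega)\, r^{n-1}\, \D \mathrm{S}_\omega\, \D r.
\]

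First I would use the positive homogeneity of $g$ to write $g(r\omega) = r^\lambda g(\omega)$ for $r > 0$, so that the integrand separates as $r^{n+\lambda-1} g(\omega)$. Since this is a product of a function of $r$ alone and a function of $\omega$ alone, Fubini's theorem lets me factor the double integral as
\[
\left(\int\limits_0^1 r^{n+\lambda-1}\, \D r\right)\left(\int\limits_{\Sn} g(\omega)\, \D \mathrm{S}_\omega\right).
\]
The radial integral evaluates to $\frac{1}{n+\lambda}$, and the claimed identity follows immediately upon multiplying through by $n+\lambda$.

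The only point that genuinely requires the stated hypothesis is the convergence of the radial integral at $r = 0$: the factor $r^{n+\lambda-1}$ is integrable on $(0,1]$ precisely when $n+\lambda-1 > -1$, i.e.\ when $n+\lambda > 0$, which is exactly the assumption imposed. This is the sole obstacle one must check; the smoothness of $g$ guarantees that $g|_{\Sn}$ is bounded and that the spherical integral is finite, so Fubini applies without further fuss. I therefore expect the proof to amount to essentially a one-line computation, with the hypothesis $n+\lambda > 0$ serving precisely to tame the near-origin behavior of the integrand.
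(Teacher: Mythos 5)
Your proof is correct and follows essentially the same route as the paper: polar coordinates $\xi = r\omega$, homogeneity to pull out the factor $r^{n+\lambda-1}$, and evaluation of the radial integral to $\frac{1}{n+\lambda}$. Your additional remark pinpointing $n+\lambda>0$ as exactly the integrability condition at $r=0$ is a welcome clarification the paper leaves implicit.
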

\begin{proof}
Using polar coordinates, we have
\begin{align*}
    \int\limits_{|\xi|\leq 1} g(\xi)\, \D \xi  &= \int\limits_0^1 \int\limits_{\Sn} g(r\xi) r^{n-1}\, \D \mathrm{S}_\xi\,  \D r
    \\ &= \int\limits_0^1 \int\limits_{\Sn} r^{n+ \lambda-1} g(\xi)\D \mathrm{S}_\xi \, \D r
    \\ &= \frac{1}{(n+\lambda)}\, \int\limits_{\Sn} g(\xi) \D \mathrm{S}_\xi.
\end{align*}
\end{proof}

\begin{lemma}\label{IBPlemma}
    Let $n \geq 2$ and $g$ be a smooth function on $\mathbb{R}^n$ such that $g$ is positive homogeneous of degree $s-1$ for some $s \in \mathbb{N}$. Then the following equality holds: 
    \begin{equation}\label{IBP}
        \int\limits_{\Sn} \frac{\p^s g}{\p \xi_{i_1} \dots \p \xi_{i_s}} \Dxi = \sum\limits_{l=0}^{ \lfloor \frac{s}{2}\rfloor} c_{l,s}  \int\limits_{\mathbb{S}^{n-1}} i^l j^l\lb \xi^{\odot s}\rb _{i_1 \cdots i_s} \, g(\xi) \, \Dxi ,
    \end{equation}
    with the constants $c_{l,s}$   given by
    \begin{align}\label{constants}
        c_{l,s} = \prod\limits_{w=0}^{(s-l-1)}(n-1+2w) \frac{(-1)^l \, s! }{2^l \, l! \, (s-2l)!}.
    \end{align}
\end{lemma}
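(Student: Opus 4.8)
The plan is to reduce the whole identity to a single integration-by-parts formula on the unit ball and then iterate it, using Lemma~\ref{homogenous_property} to pass back and forth between integrals over $\Sn$ and integrals over $\{|\xi|\le1\}$. First I would establish the basic engine. If $\phi$ is smooth and positive homogeneous of degree $\lambda$, and $i,j_1,\dots,j_r$ are fixed indices, then with $\xi_J=\xi_{j_1}\cdots\xi_{j_r}$ the integrand $(\partial\phi/\partial\xi_i)\,\xi_J$ is positive homogeneous of degree $\lambda+r-1$. Provided $n+\lambda+r-1>0$, Lemma~\ref{homogenous_property} turns its $\Sn$-integral into $(n+\lambda+r-1)$ times its integral over $\{|\xi|\le1\}$; applying the divergence theorem (outward normal $\xi$) together with $\partial_{\xi_i}(\xi_J)=\sum_t\delta_{ij_t}\,\xi_{J\setminus j_t}$, and converting the resulting lower-order ball integrals back to $\Sn$ via Lemma~\ref{homogenous_property} once more, yields the reduction formula
\begin{equation*}
\int_{\Sn}\frac{\partial\phi}{\partial\xi_i}\,\xi_J\,\Dxi
=(n+\lambda+r-1)\int_{\Sn}\phi\,\xi_i\,\xi_J\,\Dxi
-\sum_{t=1}^{r}\delta_{ij_t}\int_{\Sn}\phi\,\xi_{J\setminus j_t}\,\Dxi .
\end{equation*}
Since $\partial^s g/\partial\xi_{i_1}\cdots\partial\xi_{i_s}$ is homogeneous of degree $-1$ and $n\ge2$, every degree encountered during the iteration is $\ge-1$ and the relevant quantity $n+(\cdot)$ stays positive, so the formula always applies.

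Next I would apply this identity $s$ times, peeling one derivative of $g$ at each step, starting from $\phi=\partial^{s-1}g/\partial\xi_{i_2}\cdots\partial\xi_{i_s}$ (degree $0$) with no $\xi$-factors. Each application branches into a \emph{boundary} term, carrying one extra factor $\xi$ and the scalar $(n+\lambda+r-1)$, and $r$ \emph{contraction} terms, each carrying a $-\delta$ and deleting one existing $\xi$. A term in the final expansion is thus indexed by the set of index-pairs that were contracted into Kronecker deltas: a term with exactly $l$ contractions carries the sign $(-1)^l$, an $l$-fold product of deltas on the $i$-indices times $\xi^{\odot(s-2l)}$, and a product of the homogeneity scalars accumulated along its branch. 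Because $\partial^s g/\partial\xi_{i_1}\cdots\partial\xi_{i_s}$ is symmetric in $i_1,\dots,i_s$, I may symmetrize the whole expansion, which collapses all $l$-contraction terms into the single fully symmetric tensor $\big(i^lj^l(\xi^{\odot s})\big)_{i_1\cdots i_s}$ (using $|\xi|=1$ on $\Sn$).

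The main obstacle is the constant bookkeeping, and I would attack it in two pieces. The homogeneity-factor part resolves cleanly: at the step peeling the $(k{+}1)$-th derivative, the object $\partial^{\,s-k-1}g$ has degree $k$ and carries $r=b-c$ factors of $\xi$, where $b,c$ count the boundary and contraction steps already taken ($b+c=k$); hence the scalar from Lemma~\ref{homogenous_property} is $n-1+k+b-c=n-1+2b$, depending only on the number $b$ of previous boundary steps. Consequently a branch with $l=c$ contractions accumulates exactly $\prod_{w=0}^{s-l-1}(n-1+2w)$ — the contraction steps contribute no net scalar, their Lemma factor cancelling against the one produced when the lower-order ball integral is returned to $\Sn$ — and this product is independent of how the contractions are interleaved with the boundary steps.

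What remains, and is the genuinely delicate part, is the purely combinatorial count: I must check that summing the $(-1)^l$-signed Kronecker-delta terms over all branches with $l$ contractions, and then symmetrizing in $i_1,\dots,i_s$, reproduces $c_{l,s}\,\big(i^lj^l(\xi^{\odot s})\big)_{i_1\cdots i_s}$ — that is, that the multiplicity equals the number $\frac{s!}{2^ll!(s-2l)!}$ of partial matchings of size $l$ on $s$ indices, once the normalization built into the operators $i$ and $j$ is taken into account. Assembling the sign, this count, and the scalar product of the previous paragraph then gives precisely the constant $c_{l,s}$ of \eqref{constants}. As a sanity check and a possible base for organizing the argument as an induction on $s$, the cases $s=1$ and $s=2$ reduce respectively to $(n-1)\int_{\Sn}\xi_{i_1}g\,\Dxi$ and to $(n-1)(n+1)\int_{\Sn}\xi_{i_1}\xi_{i_2}g\,\Dxi-(n-1)\delta_{i_1i_2}\int_{\Sn}g\,\Dxi$, which match the claimed coefficients.
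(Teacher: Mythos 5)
Your proposal is correct, and it reaches the lemma by a genuinely different route than the paper. The paper argues by induction on $s$: the inductive step applies the hypothesis to $\partial g/\partial\xi_{i_r}$ (homogeneous of degree $r-2$), performs one more pass of Lemma \ref{homogenous_property} plus the divergence theorem, and then recombines the resulting sums via the coefficient recurrences $c_{l,r-1}(n+2r-2l-3)=\frac{r-2l}{r}\,c_{l,r}$ and $-c_{l-1,r-1}=\frac{2l}{r(r-2l+1)}\,c_{l,r}$, together with an algebraic expansion of $i^lj^l(\xi^{\odot r})$ into a $\xi_{i_r}$-part and a $\partial_{\xi_{i_r}}$-part, with the top term $l=\lfloor\frac{r-1}{2}\rfloor$ treated separately for $r$ odd and even. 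You instead expand in one shot, iterating a single reduction formula $s$ times and doing all the bookkeeping at the end; your reduction formula is correct as stated, and your observation that contraction steps carry no net scalar is right, since the factor $n+\lambda+r-1$ produced by Lemma \ref{homogenous_property} cancels when the lower-order ball integral is returned to the sphere. The one step you flag but do not carry out --- the multiplicity count --- does close, and in fact no symmetrization is needed: at the step peeling $\partial_{\xi_{i_k}}$, a contraction can only pair $i_k$ with an earlier index that is still free, so branches with $l$ contractions are in bijection with partial matchings of size $l$ on $\{i_1,\dots,i_s\}$ (given a matching, the larger element of each pair must contract with its partner and every other step must be a boundary step), each branch contributing its delta--$\xi$ monomial with coefficient exactly one; since $\sigma$ carries the normalization $\frac{1}{s!}\sum_{\pi}$, the tensor $i^lj^l(\xi^{\odot s})$ equals $\frac{2^l\,l!\,(s-2l)!}{s!}$ times the sum of the distinct matching monomials, so summing over all matchings yields $\frac{s!}{2^l\,l!\,(s-2l)!}\,i^lj^l(\xi^{\odot s})$ and hence exactly $c_{l,s}$. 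In terms of trade-offs: the paper's induction avoids all combinatorial counting at the cost of knowing the constants in advance and verifying two nontrivial recurrences plus an odd/even case split, whereas your expansion explains where the constants come from (product of the scalars $n-1+2b$ over boundary steps times a signed matching count) and would adapt more transparently to variants where the constants are not guessed beforehand, at the cost of the matching bijection just described.
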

\begin{proof}
    The proof proceeds by induction on $s$. 
We first prove \eqref{IBP} for $s=1$.
    
   For a smooth function $g$ positive homogeneous of degree $0$, we have from Lemma \ref{homogenous_property},
    \[
    \int\limits_{\Sn} \frac{\partial g}{\partial \xi_{i}} \, \Dxi = (n-1)\int\limits_{|\xi|\leq 1}  \frac{\partial g}{\partial \xi_{i}} \, \D \xi,
    \]
    since $\frac{\PD g}{\PD \xi_i}$ is positive homogeneous of degree $-1$. Now applying the divergence theorem, we get, 
    \[
    \int\limits_{|\xi|\leq 1} \frac{\partial g}{\partial \xi_{i}} \, \D \xi = \int\limits_{\Sb^{n-1}} \xi_i g(\xi) \Dxi.
 \]
 Hence 
 \Beq\label{s=1case}
 \int\limits_{\Sn} \frac{\partial g}{\partial \xi_{i}} \, \Dxi = (n-1)\int\limits_{\Sb^{n-1}} \xi_i g(\xi) \Dxi.
  \Eeq  
 
For $s=1$, the only choice for $l$ is $0$ and  $c_{0,1}=n-1$ in \eqref{constants}. Thus for $ s=1$ \eqref{IBP} agrees with \eqref{s=1case}. \\

Now assume that \eqref{IBP}  is true for some $s = r-1$. We want to show that the equality \eqref{IBP} holds for  $s=r$.

Let  $g$ be a positive homogeneous of degree $r-1$. Then, $\frac{\partial g}{\partial \xi_{i_r}}$ is positive homogeneous of degree $r-2$ and by induction hypothesis we obtain
        \begin{align*}
            \int\limits_{\mathbb{S}^{n-1}} \frac{\partial^r g}{\partial \xi_{i_1} \dots \partial \xi_{i_r}} \, \Dxi  = \sum\limits_{l=0}^{\lfloor \frac{r-1}{2} \rfloor}c_{l,r-1} \left(\,\int\limits_{\mathbb{S}^{n-1}} i^l j^l \lb \xi^{\odot r-1}\rb_{i_1 \cdots i_{r-1}}  \frac{\partial g}{\partial \xi_{i_r}} \, \Dxi \right).
        \end{align*}
    Note that $ i^l j^l (\xi^{\odot r-1}) \, \frac{\partial g}{\partial \xi_{i_r}} $ is a positive homogeneous function of degree $ 2r-2l-3$. 
    Applying Lemma \ref{homogenous_property} and using Gauss divergence theorem, we get
    \begin{align}
       \label{r-derivative integral} \int\limits_{\mathbb{S}^{n-1}} \frac{\partial^r g}{\partial \xi_{i_1} \dots \partial \xi_{i_r}} \, \Dxi &= \sum\limits_{l=0}^{\lfloor \frac{r-1}{2} \rfloor}c_{l,r-1} (n+2r-2l-3) \Bigg[ -\int\limits_{|\xi|\leq 1}\left( \frac{\partial}{\partial \xi_{i_r}}i^l j^l \lb \xi^{\odot r-1}\rb_{i_1 \cdots i_{r-1}} \rb  \, g(\xi) \, \D \xi
       \\ \notag & \qquad +  \int\limits_{\mathbb{S}^{n-1}} i^l j^l \lb \xi^{\odot r-1}\rb_{i_1 \cdots i_{r-1}} \, \xi_{i_r}\,  g(\xi) \, \Dxi \Bigg].
       \end{align}
      We now use Lemma \ref{homogenous_property}  in the first integral in \eqref{r-derivative integral} and obtain  
       \begin{equation}\label{eq_3.7}
             \begin{aligned}
      \int\limits_{\mathbb{S}^{n-1}} \frac{\partial^r g}{\partial \xi_{i_1} \dots \partial \xi_{i_r}} \, \Dxi  &= \sum\limits_{l=0}^{\lfloor \frac{r-1}{2} \rfloor}c_{l,r-1} (n+2r-2l-3) \int\limits_{\mathbb{S}^{n-1}} i^l j^l \lb \xi^{\odot r-1}\rb_{i_1 \cdots i_{r-1}}  \xi_{i_r} g(\xi) \, \Dxi
        \\  & \qquad - \sum\limits_{l=0}^{\lfloor \frac{r-1}{2} \rfloor}c_{l,r-1} 
        \int\limits_{\mathbb{S}^{n-1}} \lb \frac{\partial}{\partial \xi_{i_r}}i^l j^l \lb \xi^{\odot r-1}\rb_{i_1 \cdots i_{r-1}} \rb  g(\xi) \, \Dxi,
    \end{aligned}
    \end{equation}
 We separate the $l=0$ term from the first sum, and $ l = \lfloor \frac{r-1}{2} \rfloor$ term from the second sum in \eqref{eq_3.7} to get
    \begin{equation}\label{IBPlemma_eq1}
        \begin{split}
            \int\limits_{\mathbb{S}^{n-1}} \frac{\partial^r g}{\partial \xi_{i_1} \dots \partial \xi_{i_r}} \, \Dxi &= c_{0,r-1}(n+2r-3) \int\limits_{\mathbb{S}^{n-1}} \xi_{i_1} \dots \xi_{i_r} \, g(\xi) \, \Dxi
            \\ & \quad + \sum\limits_{l=1}^{\lfloor \frac{r-1}{2} \rfloor}c_{l,r-1}(n+2r-2l-3)  \int\limits_{\mathbb{S}^{n-1}}  i^l j^l \lb \xi^{\odot r-1}\rb_{i_1 \cdots i_{r-1}}  \xi_{i_r} \, g(\xi) \, \Dxi
            \\& \quad-   \sum\limits_{l=0}^{\lfloor \frac{r-1}{2} \rfloor -1}c_{l,r-1}
            \int\limits_{\mathbb{S}^{n-1}} \lb \frac{\partial}{\partial \xi_{i_r}} i^l j^l \lb \xi^{\odot r-1}\rb_{i_1 \cdots i_{r-1}} \rb g(\xi) \, dS(\xi) 
            \\ &\quad -  c_{\lfloor \frac{r-1}{2} \rfloor,r-1} \int\limits_{\mathbb{S}^{n-1}} \lb \frac{\partial} {\partial \xi_{i_r}}\, i^{\lfloor \frac{r-1}{2} \rfloor} j^{\lfloor \frac{r-1}{2} \rfloor} \lb\xi^{\odot r-1}\rb_{i_1 \cdots i_{r-1}}\rb  g(\xi) \, \Dxi.
           \end{split}
    \end{equation}
   We analyze the integrals in  \eqref{IBPlemma_eq1}  separately. 
   
     Since $c_{0,r-1}(n+2r-3) = c_{0,r}$, we have 
    \begin{equation}\label{IBPlemma_eq2}
        c_{0,r-1} (n+2r-3) \int\limits_{\mathbb{S}^{n-1}} \xi_{i_1} \dots \xi_{i_r} \, g(\xi) \, \Dxi = c_{0,r} \int\limits_{\mathbb{S}^{n-1}} \xi_{i_1} \dots \xi_{i_r} \, g(\xi) \, \Dxi.
    \end{equation}

Next we analyze the last term in \eqref{IBPlemma_eq1}. First we consider the case of odd $r$. Letting $r=2k+1$, we have $\lfloor \frac{r-1}{2} \rfloor = k$. Then we have 
\[
i^{k}j^{k} \lb \xi^{\odot 2k}\rb_{i_1 \cdots i_{2k}}=\sigma(i_1\cdots i_{2k})\lb\delta_{i_1 i_2} \cdots \delta_{i_{2k-1} i_{2k}}\rb.
\]
Hence for $r$ odd, the last term in \eqref{IBPlemma_eq1} is $0$.

Next we consider the case when $r$ is even. Letting $r=2k$, we have  $\lfloor \frac{r-1}{2} \rfloor = k-1=\lfloor \frac{r}{2} \rfloor - 1$. This implies
\begin{align*}
	c_{\lfloor \frac{r-1}{2} \rfloor, r-1} &=  \prod\limits_{w=0}^{r-k-1}(n-1+2w) \frac{(-1)^{k-1} \, (r-1)! }{2^{k-1} \, (k-1)! \, 1!}\\
	&=  -\prod\limits_{w=0}^{r-k-1}(n-1+2w) \frac{(-1)^{k} \, r! }{2^{k} \, k! }\mbox{ using} \quad r=2k,\\
	&= - c_{k,r}=-c_{\lfloor \frac{r}{2} \rfloor, r}.
\end{align*}
We have 
\begin{align*}
	\frac{\PD}{\PD\xi_{i_r}}\lb i^{k-1} j^{k-1}\lb \xi^{\odot 2k-1}\rb_{i_1 \cdots i_{2k-1}}\rb&=\frac{\PD}{\PD\xi_{i_r}} \sigma(i_1 \cdots i_{2k-1})\lb\delta_{i_1 i_2} \cdots \delta_{i_{2k-3} i_{2k-2}} \xi_{i_{2k-1}}\rb\\
	&=\sigma(i_1 \cdots i_{2k-1})\lb\delta_{i_1 i_2} \cdots \delta_{i_{2k-3} i_{2k-2}} \delta_{i_{2k-1} i_{2k}}\rb. 
	\end{align*}
We observe, recalling that $r=2k$, 
\[
\sigma(i_1 \cdots i_{r-1}) \{ \delta_{i_1 i_2} \cdots \delta_{i_{r-1}i_r}\} = \sigma(i_1 \cdots i_r) \{\delta_{i_1 i_2} \cdots \delta_{i_{r-1} i_r}\}.
\]
Therefore, 
\[
	\frac{\PD}{\PD\xi_{i_r}}\lb i^{k-1} j^{k-1}\lb \xi^{\odot 2k-1}\rb_{i_1 \cdots i_{2k-1}}\rb= i^{k}j^{k} \lb \xi^{\odot 2k}\rb_{i_1\cdots i_{2k}}.
\]
Summarizing, we have, 
    \begin{equation}\label{IBPlemma_eq3}
        \begin{split}
            -  c_{\lfloor \frac{r-1}{2} \rfloor,r-1} \int\limits_{\mathbb{S}^{n-1}} \frac{\partial} {\partial \xi_{i_r}}\lb i^{\lfloor \frac{r-1}{2} \rfloor} j^{\lfloor \frac{r-1}{2} \rfloor}\lb \xi^{\odot r}\rb_{i_1\cdots i_r} \rb g(\xi) \Dxi
            \\ =    \begin{cases}  0 &\mbox{if $r$ is odd} \\ c_{\lfloor \frac{r}{2} \rfloor, r} \int\limits_{\mathbb{S}^{n-1}} i^{\lfloor \frac{r}{2} \rfloor} j^{\lfloor \frac{r}{2} \rfloor} \lb \xi^{\odot r}\rb_{i_1 \cdots i_r} g(\xi) \, \Dxi & \text{if $r$ is even.}  \end{cases} 
        \end{split}
    \end{equation}
    We now consider the remaining terms in \eqref{eq_3.7}:
    \begin{align*}
    & \sum\limits_{l=1}^{\lfloor \frac{r-1}{2} \rfloor}c_{l,r-1}(n+2r-2l-3)  \int\limits_{\mathbb{S}^{n-1}}  i^l j^l \lb \xi^{\odot r-1}\rb_{i_1 \cdots i_{r-1}}  \xi_{i_r} \, g(\xi) \, \Dxi
    \\ &-  \sum\limits_{l=0}^{\lfloor \frac{r-1}{2} \rfloor -1}c_{l,r-1}
    \int\limits_{\mathbb{S}^{n-1}} \lb \frac{\partial}{\partial \xi_{i_r}} i^l j^l \lb \xi^{\odot r-1}\rb_{i_1 \cdots i_{r-1}} \rb g(\xi) \, \Dxi = I \quad\mbox{(say)}
    \end{align*}
    Re-indexing the second integral above we obtain
    \begin{equation}\label{IBPlemma_eq4}
        \begin{aligned}
    I
    &=\sum\limits_{l=1}^{\lfloor \frac{r-1}{2} \rfloor}c_{l,r-1}(n+2r-2l-3)  \int\limits_{\mathbb{S}^{n-1}}  i^l j^l \lb \xi^{\odot r-1}\rb_{i_1 \cdots i_{r-1}}  \xi_{i_r} \, g(\xi) \, \Dxi
    \\ &\quad-  \sum\limits_{l=1}^{\lfloor \frac{r-1}{2} \rfloor }c_{l-1,r-1}
    \int\limits_{\mathbb{S}^{n-1}} \lb \frac{\partial}{\partial \xi_{i_r}} i^{l-1}j^{l-1} \lb \xi^{\odot r-1}\rb_{i_1 \cdots i_{r-1}} \rb g(\xi) \, \Dxi.
    \end{aligned}
    \end{equation}
    
Note that $c_{l,r-1} (n+2r-2l-3) = \frac{r-2l}{r} \, c_{l,r}$ and $-c_{l-1,r-1} = \frac{2l}{r(r-2l+1)} c_{l,r}$. Combining this with \eqref{IBPlemma_eq4} we get
\begin{align}
         I&= \sum\limits_{l=1}^{\lfloor \frac{r-1}{2} \rfloor} \frac{c_{l,r}}{r} \Bigg\{ (r-2l) \int\limits_{\mathbb{S}^{n-1}} i^l j^l \lb \xi^{\odot r-1}\rb_{i_1 \cdots i_{r-1}} \xi_{i_r} \, g(\xi) \, \Dxi
      \notag  \\  & \quad+ \frac{2l}{r-2l+1} \int\limits_{\mathbb{S}^{n-1}} \lb \frac{\partial}{\partial \xi_{i_r}}\, i^{l-1} j^{l-1}\lb \xi^{\odot r-1}\rb_{i_1 \cdots i_{r-1}} \rb g(\xi) \, \Dxi \Bigg \}. \notag
    \end{align}
    Finally, we observe the following:
    for $0 \leq l \leq \lfloor \frac{r-1}{2} \rfloor$,
    
    \begin{align}
      \label{IBPlemma_eq5}  \int\limits_{\mathbb{S}^{n-1}} i^l j^l \xi^{\odot r}_{i_1 \cdots i_{r}}\, g(\xi) \, d\xi &= \frac{(r-2l)}{r} \int\limits_{\mathbb{S}^{n-1}}  i^l j^l  \lb \xi^{\odot r-1}\rb_{i_1 \cdots i_{r-1}}\xi_{i_r} \, g(\xi) \, \Dxi
        \\  \notag \quad&+ \frac{2l}{r(r-2l+1)} \int\limits_{\mathbb{S}^{n-1}} \frac{\partial}{\partial \xi_{i_r}}\lb  i^{l-1} j^{l-1} \lb \xi^{\odot r-1}\rb_{i_1 \cdots i_{r-1}} \rb \, g(\xi) \, \Dxi.
    \end{align}
    
    This can be seen by expanding the left hand side expression,
    \begin{align*}
    	 i^l j^l \lb \xi^{\odot r}\rb_{i_1 \cdots i_{r}}= \sigma(i_1 \cdots i_r)\lb \delta_{i_1 i_2} \cdots \delta_{i_{2l-1} i_{2l}} \xi_{i_{2l+1}}\cdots \xi_{i_r}\rb.
    	\end{align*}
   The first term comes from those permutations that takes $i_r$ to one of $i_{2l+1} \cdots i_r$ and the second term comes from the complement.

    
    Finally, substituting \eqref{IBPlemma_eq2}, \eqref{IBPlemma_eq3} and \eqref{IBPlemma_eq5} into \eqref{IBPlemma_eq1} we get \eqref{IBP} for $s=r$. This completes the  proof. 
\end{proof}


\begin{proof}[Proof of Proposition~\ref{key_lemma_RT}]
    We prove \eqref{key_equation_RT} for $f \in C_c^{\infty}(\Rb^n; S^m)$ first.
    
    By an iteration of \cite[Equation~2.10.2]{Sharafutdinov_Book}
    \begin{equation}\label{john_ray_relation}
        (-2)^m m! \, I_0((Rf)_{i_1 j_1 \cdots i_m j_m}) = J_{i_1 j_1} \cdots J_{i_m j_m} (J_m f),
    \end{equation} 
    where the ray transform acts on the scalar function $(Rf)_{i_1 j_1 \cdots i_m j_m}$ on the left hand side and for each $ 1\le i,j\le n$  the John operator \cite{John_diff_paper} $
    J_{ij}$ is  defined,  for functions $\vp \in C^{\infty}(\Rb^n \times \Rb^n\backslash\{0\})$, by
    \begin{equation}\label{John_operator}
         J_{ij}\vp(x,\xi)  = \left(\frac{\p^2 \vp}{\p x_i \p \xi_j} - \frac{\p^2 \vp}{\p x_j \p \xi_i}\right).
    \end{equation}
     Integrating both sides of \eqref{john_ray_relation} over $\Sn$,
    \begin{align*}
        (-2)^m m! N_0((Rf)_{i_1 j_1 \dots i_m j_m}) &= 2^{m}\alpha(i_1 j_1) \dots \alpha(i_m j_m) \frac{\p^m}{\p x_{i_1} \dots \p x_{i_m}} \int\limits_{\Sn} \frac{\p^m J_m f(x,\xi)}{\p \xi_{j_1} \dots \p \xi_{j_m}} \, \Dxi.
        \end{align*}
        Using Lemma \ref{IBPlemma},
        \begin{align*}
            &(-2)^m m! N_0((Rf)_{i_1 j_1 \dots i_m j_m})\\ &= 2^{m}\alpha(i_1 j_1) \dots \alpha(i_m j_m) \frac{\p^m}{\p x_{i_1} \dots \p x_{i_m}} \sum\limits_{l=0}^{\lfloor \frac{m}{2} \rfloor} c_{l,m}  \big(\int\limits_{\mathbb{S}^{n-1}} \bigg(i^l j^l\lb \xi^{\odot m}\rb_{j_1 \cdots j_m} J_m f(x,\xi) \, \Dxi \big)\bigg)\\
        &=2^{m}\sum\limits_{l=0}^{\lfloor \frac{m}{2} \rfloor} c_{l,m} R\lb i^{l} j^{l} N_{m}f\rb_{i_1 j_1 \cdots i_m j_m}.    \end{align*}
    Now using anti-symmetry of $R$ on the right hand side, we get \eqref{key_equation_RT}.  The proof for $f\in C_c^{\infty}(\Rb^n; S^m)$ is complete.  The proof for $f \in \Ec'(\Rb^n; S^m)$ follows by integral representation for the normal operator $N_m$; see \eqref{NormalOperator_Ray},  the density of $C_c^{\infty}(\Rb^n; S^m)$ in $\Ec'(\Rb^n; S^m)$ combined with the fact that convolution: \[
    (u,v)\to u*v \mbox{ from }
    \Ec'(\Rb^n)\times \Dc'(\Rb^n) \to \Dc'(\Rb^n), 
    \]
    is a separately continuous bilinear map. See \cite[Theorem 27.6]{TrevesBook}. 
    
\end{proof}

With the preliminary results in place, we prove the unique continuation results for the ray transform of symmetric tensor fields below.

\begin{proof}[Proof of Theorem~\ref{ucp_nrt}]
    By Lemma~\ref{smoothnesslemma}, the hypothesis $Rf|_U = 0$ implies that the normal operator $N_m f$ is smooth in $U$. Also, $N_m f$ vanishes to infinite order at $x_0 \in U$. Thus, by \eqref{key_equation_RT}, $N_0(Rf)_{i_1 j_1 \dots i_m j_m}$ is smooth in $U$ and vanishes to infinite order at $x_0 \in U$. Using \cite[Theorem 1.1]{Keijo_partial_function}, we conclude that $(Rf)_{i_1 j_1 \dots j_m j_m} \equiv 0$. Since this is true for all indices $i_1, j_1, \dots i_m, j_m$, $Rf$ vanishes identically on $\Rb^n$, which is equivalent to $Wf$ vanishing on $\Rb^n$. Thus by \cite[Theorem 2.5.1]{Sharafutdinov_Book}, there exists a field $v \in \Ec'(\Rb^n; S^{m-1})$ whose support is contained in the convex hull of  support of $f$ and $f = \D v$. 
\end{proof}

The proof of Theorem \ref{ucpnrt1} follows as a direct consequence of Theorem \ref{ucp_nrt}.

\begin{proof}[Proof of Corollary~\ref{ucp_rt}]
    Since $J_m f (x,\xi) = 0$ for $x \in U$ and $\xi \in \Sn$, using homogeneity properties, we know $I_m f\lb x-\langle x,\xi\rangle \xi, \xi\rb$ and hence we have that $N_m f (x) = 0$ for all $x \in U$. This in particular implies that $N_m f$ is smooth in $U$ and, trivially, vanishes to infinite order at any point $x_0 \in U$. The conclusion follows from Theorem \ref{ucp_nrt}. 
\end{proof}

We also present a much simpler proof working directly with the ray transform.

\begin{proof}[Alternate proof of Corollary~\ref{ucp_rt}]
    Since $I_m f(x, \xi) = 0$ for all $\xi \in \Sn$ and $x \in U$, the left hand side of \eqref{john_ray_relation} vanishes, and hence so does the right hand side. Since $f \in \Ec'(\Rb^n; S^m)$, $(Rf)_{i_1 j_1 \dots i_m j_m} \in \Ec'(\Rb^n)$ and thus by  \cite[Theorem 1.2] {Keijo_partial_function}, $Rf$ vanishes identically on $\Rb^n$. The proof now follows from \cite[Theorem 2.5.1]{Sharafutdinov_Book}.
\end{proof}

\section{ucp for momentum ray transforms}\label{UCP:MRT}
In this section we study unique continuation for momentum ray transforms. We prove  an analogue of the identity proved for ray transforms (Proposition \ref{key_lemma_RT}), for momentum ray transforms in Proposition \ref{MRT:Prop} below. We first show the equivalence of $R^k$ and $W^k$ and then we prove a lemma required in the proof of Proposition \ref{key_lemma_RT}. 

\begin{lemma}\label{rkwk_equivalence}
	Let $f \in \Dc'(\Rb^n; S^m)$ and $U \subseteq \Rb^n$ be open. Then for $0 \leq k \leq m$, $W^k f|_U = 0$ if and only if $R^k f|_U = 0$. In fact, the following equalities hold
	\begin{align}
		(W^k f)_{p_1 \dots p_{m-k} q_1 \dots q_{m-k} i_1 \dots i_k} &= 2^{m-k}\sigma(q_1 \dots q_{m-k} i_1 \dots i_k) \sigma(p_1 \dots p_{m-k}) \label{GRtoGW}
		\\ & \hspace{5cm} (R^k f)_{p_1 q_1 \dots p_{m-k} q_{m-k} i_1 \dots i_k} \notag
		\\ (R^k f)_{p_1 q_1 \dots p_{m-k} q_{m-k} i_1 \dots i_k} &= \frac{1}{m-k+1} \binom{m}{k} \alpha(p_1 q_1) \dots \alpha(p_{m-k} q_{m-k}) \label{GWtoGR}
		\\ & \hspace{5cm} (W^k f)_{p_1 \dots p_{m-k} q_1 \dots q_{m-k} i_1 \dots i_k}. \notag
	\end{align}
\end{lemma}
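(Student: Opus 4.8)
The plan is to reduce the equivalence of $W^k$ and $R^k$ to the already-available equivalence of $W$ and $R$ (equations \eqref{RtoW} and \eqref{WtoR}) applied to the order-$(m-k)$ tensor field obtained by freezing the top $k$ indices of $f$. The crucial observation is essentially notational: writing $g = f^{i_1 \dots i_k}$ for the symmetric $(m-k)$-tensor field with the indices $i_1, \dots, i_k$ held fixed, definition \eqref{an_alternate_definition} says precisely that $(R^k f)_{p_1 q_1 \dots p_{m-k} q_{m-k} i_1 \dots i_k} = (Rg)_{p_1 q_1 \dots p_{m-k} q_{m-k}}$, while comparing \eqref{GSV} with \eqref{SV} shows that the summation defining $W^k f$ coincides termwise with the one defining $Wg$; the two differ only in that $W^k$ symmetrizes the block $q_1 \dots q_{m-k} i_1 \dots i_k$, whereas $W$ symmetrizes only $q_1 \dots q_{m-k}$. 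Since symmetrizing over a superset of indices absorbs symmetrization over any subset, i.e. $\sigma(q_1 \dots q_{m-k} i_1 \dots i_k)\,\sigma(q_1 \dots q_{m-k}) = \sigma(q_1 \dots q_{m-k} i_1 \dots i_k)$, and since the $p$- and $q$-symmetrizations act on disjoint index sets and hence commute, one obtains the reduction identity
\[
(W^k f)_{p_1 \dots p_{m-k} q_1 \dots q_{m-k} i_1 \dots i_k} = \sigma(q_1 \dots q_{m-k} i_1 \dots i_k)\,(Wg)_{p_1 \dots p_{m-k} q_1 \dots q_{m-k}},
\]
where the outer symmetrization acts on the $q$-slots of $Wg$ together with the frozen indices $i_1, \dots, i_k$. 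Both \eqref{GRtoGW} and \eqref{GWtoGR} are purely algebraic (derivative-free) relations between the tensors $W^k f$ and $R^k f$ at each point, so once they are established the claimed equivalence ``$W^k f|_U = 0$ if and only if $R^k f|_U = 0$'' holds on every open set $U$.

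For the forward formula \eqref{GRtoGW}, I would substitute the order-$(m-k)$ relation \eqref{RtoW}, namely $(Wg) = 2^{m-k}\,\sigma(p_1 \dots p_{m-k})\,\sigma(q_1 \dots q_{m-k})\,(Rg)$, into the reduction identity above. Absorbing the inner $\sigma(q_1 \dots q_{m-k})$ into $\sigma(q_1 \dots q_{m-k} i_1 \dots i_k)$ and commuting the $p$-symmetrization past it leaves exactly $2^{m-k}\,\sigma(p_1 \dots p_{m-k})\,\sigma(q_1 \dots q_{m-k} i_1 \dots i_k)\,(R^k f)$, which is \eqref{GRtoGW}.

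For the reverse formula \eqref{GWtoGR}, I would apply the alternation operator $A := \alpha(p_1 q_1)\cdots\alpha(p_{m-k} q_{m-k})$ to $W^k f$. Because $R^k f = Rg$ is already skew-symmetric in each pair $(p_j, q_j)$ (the property recorded after \eqref{RSV}), the operator $A$ fixes it, while the order-$(m-k)$ relation \eqref{WtoR} gives $A\,(Wg) = (m-k+1)\,Rg = (m-k+1)\,R^k f$. By the reduction identity it therefore remains to evaluate $A\,\sigma(q_1 \dots q_{m-k} i_1 \dots i_k)(Wg)$ and to show it equals $\binom{m}{k}^{-1}\,A\,(Wg)$; granting this, \eqref{GWtoGR} follows immediately. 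This combinatorial factor $\binom{m}{k}^{-1}$ is the step I expect to be the main obstacle. It arises because $\sigma(q_1 \dots q_{m-k} i_1 \dots i_k)$ averages over all $m!$ permutations of the combined index block, whereas after $A$ pairs each $p_j$ with the index occupying slot $q_j$, only those permutations that keep the original $q$-indices as a set in the $q$-slots (and the frozen $i$-indices in the $i$-slots) survive without cancellation; such permutations form a fraction $\frac{(m-k)!\,k!}{m!} = \binom{m}{k}^{-1}$ of the total. Making this rigorous uses the full symmetry of $f$, which renders the frozen block $i_1 \dots i_k$ interchangeable with the differentiated indices, together with the skew-symmetry that $A$ enforces; I would carry out the bookkeeping by splitting the sum over $\Pi_m$ according to which indices land in the $q$-slots and verifying that every surviving contribution equals $A(Wg)$.

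Finally, combining \eqref{GRtoGW} and \eqref{GWtoGR}, each of $W^k f$ and $R^k f$ is an explicit pointwise (symmetrization/alternation) image of the other, so one vanishes on $U$ precisely when the other does, giving the stated equivalence. Since $W^k$, $R^k$ and the operators $\sigma$, $\alpha$ act continuously on $\Dc'(\Rb^n; S^m)$, both the identities and the equivalence pass from smooth fields to the distributional setting by the same formulas.
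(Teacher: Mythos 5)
Your reduction identity and your treatment of \eqref{GRtoGW} are exactly the paper's route: the identity $(W^kf)=\sigma(q_1\dots q_{m-k}i_1\dots i_k)(Wf^{i_1\dots i_k})$ is quoted there from \cite[Equation 27]{SumanSIAM} and combined with \eqref{RtoW} at order $m-k$, and your closing remarks on pointwise equivalence and passage to distributions are unproblematic. The genuine gap is exactly the step you flagged as the main obstacle: the claimed interchange $\alpha(p_1q_1)\cdots\alpha(p_{m-k}q_{m-k})\,\sigma(q_1\dots q_{m-k}i_1\dots i_k)(Wg)=\binom{m}{k}^{-1}\alpha(p_1q_1)\cdots\alpha(p_{m-k}q_{m-k})(Wg)$ is false, because the ``cross'' permutations that move a frozen index $i_b$ into a $q$-slot do \emph{not} die under the alternations. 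Test $m=2$, $k=1$, writing $\partial_q=\partial/\partial x_q$: by \eqref{GSV}, $(W^1f)_{pqi}=\sigma(qi)\bigl(\partial_qf_{ip}-\partial_pf_{iq}\bigr)=\frac12\bigl(S_{pqi}+B_{pqi}\bigr)$ with principal term $S_{pqi}=\partial_qf_{ip}-\partial_pf_{iq}$ and cross term $B_{pqi}=\partial_if_{qp}-\partial_pf_{qi}$. Using the symmetry of $f$,
\begin{equation*}
2\,\alpha(pq)B_{pqi}=\bigl(\partial_if_{qp}-\partial_pf_{qi}\bigr)-\bigl(\partial_if_{pq}-\partial_qf_{pi}\bigr)=\partial_qf_{ip}-\partial_pf_{iq}=S_{pqi}\neq 0,
\end{equation*}
so the single cross permutation survives alternation with weight $\frac12$ relative to the principal term, and there is no partner available for it to cancel against. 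Consequently $\alpha(pq)(W^1f)_{pqi}=\frac12\bigl(1+\frac12\bigr)S_{pqi}=\frac32(R^1f)_{pqi}$, whereas your fraction $\binom{2}{1}^{-1}$ predicts $\alpha(pq)(W^1f)=R^1f$. The cross terms must be tracked, not discarded: after alternation each is proportional to the principal term with a nontrivial weight, and these weights change the constant.

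The paper avoids your one-shot interchange entirely: following \cite[Lemma 2.4.2]{Sharafutdinov_Book}, it decomposes both symmetrizations in \eqref{GSV} with respect to the last indices $p_{m-k}$, $q_{m-k}$, applies a single alternation $\alpha(p_{m-k}q_{m-k})$, merges the $l$-th and $(l+1)$-th binomial terms into a factor $\frac{m-k+1}{m}$ times one derivative of $W^k$ applied to the field with $p_{m-k}$ frozen, and iterates $m-k$ times. You should be aware, though, that the same $(m,k)=(2,1)$ computation puts pressure on the printed constant in \eqref{GWtoGR}: in the paper's induction the cross swaps $q_{m-k}\leftrightarrow i_b$ are absorbed into a remainder $h$ asserted to be symmetric in $(p_{m-k},q_{m-k})$, which the display above contradicts. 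Direct computation gives $R^1f=\frac23\,\alpha(pq)(W^1f)$ for $(m,k)=(2,1)$ and $R^2f=\frac34\,\alpha(p_1q_1)(W^2f)$ for $(m,k)=(3,2)$, consistent with a constant $\frac{k+1}{m+1}$ (which also matches both endpoints $k=0$ and $k=m$) rather than $\frac{1}{m-k+1}\binom{m}{k}$; curiously, your heuristic reproduces the paper's printed constant, and both appear off for $0<k<m$. None of this threatens the qualitative conclusion: since $W^kf$ and $R^kf$ are each universal symmetrization/alternation images of the other with nonzero constants, the equivalence $W^kf|_U=0$ if and only if $R^kf|_U=0$ --- which is all Theorem \ref{mrt_ucp} uses --- stands; but as written your argument for \eqref{GWtoGR} does not go through.
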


\begin{proof}
	We present the  proof using similar ideas  from \cite[Lemma 2.4.2]{Sharafutdinov_Book}. 
	From \cite[Equation 27]{SumanSIAM} we obtain 
	\begin{align*}
		(W^k f)_{p_1 \dots p_{m-k} q_1 \dots q_{m-k} i_1 \dots i_k} = \sigma(q_1 \dots q_{m-k} i_1 \dots i_k) (Wf^{i_1 \dots i_k})_{p_1 \dots p_{m-k} q_1 \dots q_{m-k}}.
	\end{align*}
	Using \eqref{RtoW} for an $m-k$-tensor field $f^{i_1 \dots i_k}$ we get \eqref{GRtoGW}.
	
	Now, we prove \eqref{GWtoGR}. Decomposing the symmetrizations $\sigma(p_1 \dots p_{m-k})$ and $\sigma(q_1 \dots q_{m-k} i_1 \dots i_k)$ in the definition of $W^k$ with respect to the indices $p_{m-k}$ and $q_{m-k}$ and taking into account the symmetries of $f$ and mixed partial derivatives, we get
	\begin{align*}
		(W^k f)_{p_1 \dots p_{m-k} q_1 \dots q_{m-k} i_1 \dots i_k} &= \sigma(p_1 \dots p_{m-k-1}) \sigma(q_1 \dots q_{m-k-1} i_1 \dots i_k) \sum\limits_{l=0}^{m-k} (-1)^l \binom{m-k}{l} \frac{1}{m(m-k)}
		\\ & \hspace{6mm} \times \bigg[ (m-k-l)^2 \frac{\p^{m-k} f^{i_1 \dots i_k}_{p_1 \dots p_{m-k-l-1} p_{m-k} q_1 \dots q_l}}{\p x_{p_{m-k-l}} \dots \p x_{p_{m-k-1}} \p x_{q_{l+1}} \dots \p x_{q_{m-k}}}
		\\ & \hspace{1cm} + l^2 \frac{\p^{m-k} f^{i_1 \dots i_k}_{p_1 \dots p_{m-k-l} q_1 \dots q_{l-1} q_{m-k}}}{\p x_{p_{m-k-l+1}} \dots \p x_{p_{m-k}} \p x_{q_{l}} \dots \p x_{q_{m-k-1}}} +h \bigg],
	\end{align*}
	where $h$ is some tensor symmetric in $p_{m-k}, q_{m-k}$. 
	Applying the operator $\alpha(p_{m-k} \,  q_{m-k})$ to this equality and noting that it commutes with $\sigma(p_1 \dots p_{m-k-1})$ and $\sigma(q_1 \dots q_{m-k-1} i_1 \dots i_k)$, we obtain
	\begin{align*}
		\alpha(p_{m-k} \, q_{m-k}) (W^k f)_{p_1 \dots p_{m-k} q_1 \dots q_{m-k} i_1 \dots i_k} &= \alpha(p_{m-k} \, q_{m-k}) \sigma(p_1 \dots p_{m-k-1}) \sigma(q_1 \dots q_{m-k-1} i_1 \dots i_k)
		\\ & \hspace{6mm} \sum\limits_{l=0}^{m-k} (-1)^l \binom{m-k}{l} \frac{1}{m(m-k)}
		\\ & \hspace{6mm} \times \bigg[ (m-k-l)^2 \frac{\p^{m-k} f^{i_1 \dots i_k}_{p_1 \dots p_{m-k-l-1} p_{m-k} q_1 \dots q_l}}{\p x_{p_{m-k-l}} \dots \p x_{p_{m-k-1}} \p x_{q_{l+1}} \dots \p x_{q_{m-k}}}
		\\ & \hspace{1cm} - l^2 \frac{\p^{m-k} f^{i_1 \dots i_k}_{p_1 \dots p_{m-k-l} p_{m-k} q_1 \dots q_{l-1}}}{\p x_{p_{m-k-l+1}} \dots \p x_{p_{m-k-1}} \p x_{q_{l}} \dots \p x_{q_{m-k}}}  \bigg],
	\end{align*}
	where we interchanged $p_{m-k}$ and $q_{m-k}$ in the last term, contributing to the negative sign. 
	
	Combining the first summand in the brackets of the $l$-th term of the sum with the second summand of the $(l+1)$-th term, we get
	\begin{align*}
		&  \alpha(p_{m-k} \, q_{m-k}) (W^k f)_{p_1 \dots p_{m-k} q_1 \dots q_{m-k} i_1 \dots i_k}\\ =& \frac{m-k+1}{m} \alpha(p_{m-k} \, q_{m-k}) \sigma(p_1 \dots p_{m-k-1}) \sigma(q_1 \dots q_{m-k-1} i_1 \dots i_k)
		\\& \sum\limits_{l=0}^{m-k-1} (-1)^l \binom{m-k-1}{l} \frac{\p^{m-k} f^{i_1 \dots i_k}_{p_1 \dots p_{m-k-l-1} p_{m-k}} q_1 \dots q_l}{\p x_{p_{m-k-l}} \dots \p x_{p_{m-k-1}} \p x_{q_{l+1}} \dots \p x_{q_{m-k}}}\\
		&= \frac{m-k+1}{m} \alpha(p_{m-k} q_{m-k}) \frac{\p}{\p x_{q_{m-k}}} (W^k f^{p_{m-k}})_{p_1 \dots p_{m-k-1} q_1 \dots q_{m-k-1} i_1 \dots i_k},
	\end{align*}
	where on the right hand side $W^k$ acts on the $(m-1)$-tensor field obtained by fixing the index $p_{m-k}$ in $f$.
	Iterating this $(m-k)$-times, we get
	\begin{align}
		\alpha(p_1 q_1) \dots \alpha(p_{m-k} q_{m-k}) (W^k f)_{p_1 \dots p_{m-k} q_1 \dots q_{m-k} i_1 \dots i_k} 
		&= \frac{(m-k+1)}{\binom{m}{k}} \alpha(p_1 q_1) \dots \alpha(p_{m-k} q_{m-k})
		\\ & \hspace{1cm} \frac{\p^{m-k}}{\p x_{q_1} \dots \p x_{q_{m-k}}} (W^k f^{p_1 \dots p_{m-k}})_{i_1 \dots i_k}.   \notag
	\end{align}
	Finally, note that on the right hand side, $W^k$ acts on the $k$-tensor obtained by fixing $(m-k)$-indices. Since by our convention, $W^k$ acting on $k$-tensors is just the identity operator, \eqref{GWtoGR} follows.
\end{proof}

\begin{remark}
	From the expression for $R^k$ in \eqref{an_alternate_definition}, it is clear that given $R^k$ for some $0 \leq k \leq m$, we can recover $R^s$ for all $0 \leq s < k$. Since $W^k$ and $R^k$ are equivalent, we can recover $W^s$ for $0 \leq s \leq k$ as well.
\end{remark} 

\begin{lemma}\label{key_lemma_mrt1}
	Let $f \in C_c^{\infty}(\Rb^n; S^m)$. For $m \geq 0$ and $0 \leq k \leq m$, 
	\begin{equation}\label{key_equation_mrt1}
		\int\limits_{\mathbb{S}^{n-1}} \xi_{i_1} \dots \xi_{i_{m-k}} \, (J_m^kf)(x,\xi) \, \Dxi = \sum\limits_{r=0}^k (-1)^{k-r} \frac{1}{r!} \binom{k}{r} \lb j_{x^{\odot k-r}} \delta^r N_m^rf \rb_{i_1 \cdots i_{m-k}},
	\end{equation}
where the operator $j$ is given in \eqref{jop} and $\delta$ is given in \eqref{def_of_delta}.
\end{lemma}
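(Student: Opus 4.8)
The plan is to read the claimed identity as the binomial inversion of the divergence formula \eqref{div_nmrt}. For $0\le l\le m$ write $P_l(x)$ for the symmetric $(m-l)$-tensor field with components
\[
\bigl(P_l(x)\bigr)_{i_1\cdots i_{m-l}}=\int\limits_{\Sn}\xi_{i_1}\cdots\xi_{i_{m-l}}\,(J_m^l f)(x,\xi)\,\Dxi,
\]
so that the left-hand side of \eqref{key_equation_mrt1} is exactly $P_k$. Since \eqref{div_nmrt} is valid for every order, I would apply it with $k$ replaced by each $r$ with $0\le r\le k$, obtaining
\[
(\delta^r N_m^r f)_{i_1\cdots i_{m-r}}(x)=r!\sum_{l=0}^r\binom{r}{l}\int\limits_{\Sn}\langle x,\xi\rangle^{r-l}\,\xi_{i_1}\cdots\xi_{i_{m-r}}\,(J_m^l f)(x,\xi)\,\Dxi.
\]

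First I would recast this purely in terms of the $P_l$. Using the identity $\langle x,\xi\rangle^{r-l}\,\xi_{i_1}\cdots\xi_{i_{m-r}}=\bigl(j_{x^{\odot(r-l)}}\,\xi^{\odot(m-l)}\bigr)_{i_1\cdots i_{m-r}}$ and pulling the contraction $j_{x^{\odot(r-l)}}$ (in which $x$ is fixed) out of the $\xi$-integral, the previous display becomes
\begin{equation*}
\frac{1}{r!}\,\delta^r N_m^r f=\sum_{l=0}^r\binom{r}{l}\,j_{x^{\odot(r-l)}}\,P_l,\qquad 0\le r\le k.\tag{$\star$}
\end{equation*}
At this point I would record two elementary facts: that $j_{x^{\odot(r-l)}}P_l$ is an $(m-r)$-tensor, so the degrees in $(\star)$ match; and that these contractions compose additively, $j_{x^{\odot a}}\,j_{x^{\odot b}}=j_{x^{\odot(a+b)}}$, which is immediate from \eqref{jop} and the symmetry of the tensors involved. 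Writing $D:=j_{x^{\odot 1}}$, this says $j_{x^{\odot a}}=D^a$, so all the coefficient operators in $(\star)$ are powers of the single operator $D$.

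With $(\star)$ in hand, the identity to be proved is precisely the inverse of the binomial transform $Q_r=\sum_{l=0}^r\binom{r}{l}D^{r-l}P_l$, where $Q_r:=\frac{1}{r!}\,\delta^r N_m^r f$. Because every coefficient is a power of the commuting operator $D$, the inversion reduces to the scalar case. Concretely, introducing the formal generating series $\widehat P(t)=\sum_l P_l\,t^l/l!$ and $\widehat Q(t)=\sum_r Q_r\,t^r/r!$, relation $(\star)$ reads $\widehat Q(t)=e^{tD}\widehat P(t)$, whence $\widehat P(t)=e^{-tD}\widehat Q(t)$; comparing the coefficients of $t^k/k!$ yields
\[
P_k=\sum_{r=0}^k(-1)^{k-r}\binom{k}{r}D^{k-r}Q_r=\sum_{r=0}^k(-1)^{k-r}\frac{1}{r!}\binom{k}{r}\,j_{x^{\odot(k-r)}}\,\delta^r N_m^r f,
\]
which is \eqref{key_equation_mrt1}. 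One may bypass generating functions and instead substitute $(\star)$ directly into the right-hand side, the coefficient of each $P_l$ collapsing through the standard identity $\sum_{r=l}^k(-1)^{k-r}\binom{k}{r}\binom{r}{l}=\delta_{kl}$. The case $k=0$ is a consistency check, since $P_0=N_m f=\delta^0N_m^0 f$.

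I expect the only genuinely delicate point to be that this binomial inversion is carried out over operators rather than scalars; the whole scheme works solely because the coefficients $j_{x^{\odot a}}=D^a$ commute and compose additively. Hence I would take care to justify the composition law and the fact that the fixed-$x$ contraction passes through the $\xi$-integral before invoking the inversion — everything else is routine symmetric-tensor index bookkeeping.
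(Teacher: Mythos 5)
Your proof is correct and is essentially the paper's own argument in different packaging: the paper likewise starts from \eqref{div_nmrt}, rewrites $\langle x,\xi\rangle^{k-l}\,\xi_{i_1}\cdots\xi_{i_{m-k}}$ as a contraction $j_{x^{\odot (k-l)}}$ pulled out of the $\xi$-integral, and collapses the coefficients via the same orthogonality relation (the paper uses $\sum_{l=r}^{k}(-1)^{l-r}\binom{k}{l}\binom{l}{r}=0$, equivalent to your $\sum_{r=l}^{k}(-1)^{k-r}\binom{k}{r}\binom{r}{l}=\delta_{kl}$), merely organizing the inversion of the triangular system $(\star)$ as an induction on $k$ instead of your one-shot binomial/generating-function inversion. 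The two points you flag as delicate — the composition law $j_{x^{\odot a}}\,j_{x^{\odot b}}=j_{x^{\odot (a+b)}}$ and passing the fixed-$x$ contraction through the integral — are exactly the steps used implicitly in the paper's inductive step, so nothing is missing.
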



\begin{proof}
	We prove this result by induction on $k$. 
	For $k=0$, \eqref{key_equation_mrt1} is just the definition of the normal operator of the ray transform of a symmetric $m$-tensor field $f$ given in \eqref{Normal}. 
	
	Now assume that \eqref{key_equation_mrt1} is true for all $0,1, \dots, k-1$ and we want to prove this for $k$.
	By \eqref{div_nmrt}, we have,
	\begin{align*}
		\int\limits_{\mathbb{S}^{n-1}} \xi_{i_1} \dots \xi_{i_{m-k}}  \, (J_m^kf)(x,\xi) \, \Dxi&= \frac{1}{k!} (\delta^k N_m^k f)_{i_1 \dots i_{m-k}} (x)  
		\\ & -\sum\limits_{l=0}^{k-1} \binom{k}{l} \int\limits_{\mathbb{S}^{n-1}} \langle x, \xi\rangle^{k-l} \xi_{i_1} \dots \xi_{i_{m-k}} (J_m^l f)(x, \xi) \, \Dxi.
	\end{align*}
	
	Since $\langle x,\xi\rangle^{r}= j_{x^{\odot r}}\xi^{\odot r}$, we have,
	together with the induction hypothesis,
	\begin{align}\label{eq_4.4}
		\int\limits_{\mathbb{S}^{n-1}} & \xi_{i_1} \dots \xi_{i_{m-k}}  \, (J_m^kf)(x,\xi) \, \Dxi \nonumber \\ &= \frac{1}{k!} (\delta^k N_m^k f)_{i_1 \dots i_{m-k}} (x)
		 - \sum\limits_{l=0}^{k-1} \binom{k}{l} j_{x^{\odot k-l}} \lb \int\limits_{\mathbb{S}^{n-1}} \xi_{i_1} \dots \xi_{i_{m-k}} \xi^{\odot k-l} (J_m^lf)(x,\xi) \Dxi  \rb
		\nonumber  \\ \nonumber&= \frac{1}{k!} (\delta^k N_m^k f)_{i_1 \dots i_{m-k}} (x)
		- \sum\limits_{l=0}^{k-1} \binom{k}{l}  \sum\limits_{r=0}^l (-1)^{l-r} \frac{1}{r!} \binom{l}{r} \lb j_{x^{\odot k-r}} \delta^r N_m^rf \rb_{i_1 \dots i_{m-k}}
		\nonumber   \\ 
		\intertext{Interchanging the order of summation in the second term,}
		&= \frac{1}{k!} (\delta^k N_m^k f)_{i_1 \dots i_{m-k}} (x)-\sum\limits_{r=0}^{k-1} \frac{1}{r!} \lb j_{x^{\odot k-r}} \delta^r N_m^rf \rb_{i_1 \cdots i_{m-k}} 
		\sum\limits_{l=r}^{k-1} \binom{k}{l} (-1)^{l-r} \binom{l}{r}.
	\end{align}
	

	We note that   
	\begin{align}\label{eq_4.5}
		\sum\limits_{l=r}^k (-1)^{l-r} \binom{k}{l} \binom{l}{r} = 0.
	\end{align}
	This is obtained by differentiating the binomial expansion of $(1+x)^k$, $r$ number of times and letting $x=-1$. From \eqref{eq_4.5}, we have $\sum\limits_{l=r}^{k-1} \binom{k}{l} (-1)^{l-r} \binom{l}{r} = (-1)^{k-r+1}\binom{k}{r}$.
	Combining this with \eqref{eq_4.4} we get
	\begin{align*}
		\int\limits_{\mathbb{S}^{n-1}} \xi_{i_1} \dots \xi_{i_{m-k}}  \, (J_m^kf)(x,\xi) \, \Dxi &= \sum\limits_{r=0}^k (-1)^{k-r} \frac{1}{r!} \binom{k}{r} \lb j_{x^{\odot k-r}} \delta^r N_m^rf \rb_{i_1 \cdots i_{m-k}}.
	\end{align*}
	This completes the proof.
\end{proof}

\begin{proposition}\label{MRT:Prop}
	 For $f \in \Ec'(\Rb^n; S^m)$,
\begin{equation}
    \begin{split}\label{key_equation_mrt}
		m! \, & N_0 ((Rf^{i_1 \dots i_k})_{p_1 q_1 \dots p_{m-k} q_{m-k}})   \\&=  \sigma(i_1 \dots i_k) \,\sum\limits_{r=0}^k (-1)^r \binom{k}{r} \frac{\partial^{r}}{\partial x_{i_1} \dots \partial x_{i_r} }  (R^k(G_{m-r}))_{p_1 q_1 \dots p_{m-k} q_{m-k} i_{r+1} \dots i_k},
    \end{split}
\end{equation}
where $G_{m-r}$ is a symmetric $(m-r)$-tensor given as

$ \lb G_{m-r}\rb_{i_{r+1} \dots i_k q_1 \dots q_{m-k}} = \sum\limits_{l=0}^{\lfloor\frac{m-r}{2} \rfloor} c_{l, m-r} \, i^l j^l \Bigg( \sum\limits_{p=0}^r (-1)^{r-p} \frac{1}{p!} \binom{r}{p}  (j_{x^{\odot r-p}} \delta^p N_m^p f)_{i_{r+1} \dots i_k q_1 \dots q_{m-k}}  \Bigg).$
\end{proposition}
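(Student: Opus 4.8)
The plan is to reduce \eqref{key_equation_mrt} to the scalar John--ray identity \eqref{john_ray_relation}, exactly as in Proposition~\ref{key_lemma_RT}, but applied to the fixed-index field $f^{i_1\dots i_k}$ rather than to $f$ itself. Since $(Rf^{i_1\dots i_k})_{p_1 q_1\dots p_{m-k}q_{m-k}}$ is precisely the operator $R$ of \eqref{RSV} applied to the symmetric $(m-k)$-tensor $f^{i_1\dots i_k}$, iterating \eqref{john_ray_relation} (with $m$ replaced by $m-k$) gives
\[
(-2)^{m-k}(m-k)!\, I_0\lb (Rf^{i_1\dots i_k})_{p_1 q_1\dots p_{m-k}q_{m-k}}\rb = J_{p_1 q_1}\cdots J_{p_{m-k}q_{m-k}}\lb J_{m-k}f^{i_1\dots i_k}\rb .
\]
The feature that is new relative to the ray-transform case is that the right-hand side involves the \emph{partial} ray transform $J_{m-k}f^{i_1\dots i_k}$, with only $m-k$ of the indices contracted against $\xi$; this must be re-expressed through the momentum ray transforms $J_m^r f$, $0\le r\le k$, that build $G_{m-r}$.

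The bridge between the two is a pointwise identity that I would isolate as a separate lemma:
\[
\frac{m!}{(m-k)!}\, J_{m-k}\lb f^{i_1\dots i_k}\rb(x,\xi) = \sigma(i_1\dots i_k)\sum_{r=0}^k (-1)^r\binom{k}{r}\frac{\p^r}{\p x_{i_1}\cdots\p x_{i_r}}\frac{\p^{k-r}\lb J_m^r f\rb}{\p\xi_{i_{r+1}}\cdots\p\xi_{i_k}}(x,\xi).
\]
Setting $Q^{(r)}_{i_1\dots i_s}(x,\xi)=\int_{\Rb} t^r f_{i_1\dots i_s j_1\dots j_{m-s}}(x+t\xi)\,\xi^{j_1}\cdots\xi^{j_{m-s}}\,\D t$, a direct differentiation in $\xi$ gives the recursion
\[
(m-s)\,Q^{(r)}_{i_1\dots i_s i} = \frac{\p}{\p\xi_i}Q^{(r)}_{i_1\dots i_s} - \frac{\p}{\p x_i}Q^{(r+1)}_{i_1\dots i_s},
\]
and iterating it $k$ times — equivalently expanding the commuting operator product $\prod_{j=1}^k\lb \p_{\xi_{i_j}}-\p_{x_{i_j}}S\rb$, where $S$ raises the momentum order by one — produces the displayed formula after the binomial theorem and symmetrization.

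With the bridge lemma in hand I would integrate the John identity over $\Sb^{n-1}$, multiply by $m!/(m-k)!$, and substitute; the symmetrization, the outer $x$-derivatives, and the sum over $r$ all commute past the John operators and the integral. Writing $J_{pq}=2\,\alpha(p\,q)\,\p_{x_p}\p_{\xi_q}$ from \eqref{John_operator}, each factor contributes one antisymmetrizer and one $x$-derivative, so what remains inside the integral is $\int_{\Sb^{n-1}}\frac{\p^{m-r}(J_m^r f)}{\p\xi_{q_1}\cdots\p\xi_{q_{m-k}}\p\xi_{i_{r+1}}\cdots\p\xi_{i_k}}\,\Dxi$. I would then identify this with $G_{m-r}$: by Lemma~\ref{key_lemma_mrt1} the inner sum defining $G_{m-r}$ equals $\int_{\Sb^{n-1}}\xi^{\odot(m-r)}(J_m^r f)\,\Dxi$, and since $J_m^r f$ is positive homogeneous of degree $m-r-1$ by \eqref{homo_mrt1}, Lemma~\ref{IBPlemma} with $s=m-r$ shows exactly that $G_{m-r}=\int_{\Sb^{n-1}}\p_\xi^{m-r}(J_m^r f)\,\Dxi$. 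Swapping $p_j\leftrightarrow q_j$ inside each $\alpha(p_j\,q_j)$ contributes a factor $(-1)^{m-k}$ and turns $2^{m-k}\alpha(p_1 q_1)\cdots\alpha(p_{m-k}q_{m-k})\p_x^{\,m-k}G_{m-r}$ into $2^{m-k}(-1)^{m-k}R^k(G_{m-r})$; dividing through by $(-2)^{m-k}$ yields \eqref{key_equation_mrt} for $f\in C_c^\infty(\Rb^n;S^m)$. The passage to $f\in\Ec'(\Rb^n;S^m)$ then follows from the integral representation \eqref{nmrt}, density of $C_c^\infty$, and the separate continuity of convolution, exactly as at the end of the proof of Proposition~\ref{key_lemma_RT}.

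The main obstacle is the bridge lemma: one must check that $S$ genuinely commutes with $\p_\xi$ and $\p_x$ on the family $\{Q^{(r)}_{i_1\dots i_s}\}$ (so that the operator product is well defined), and then track the symmetrizations and binomial coefficients through the induction so that they collapse to exactly the pattern $(-1)^r\binom{k}{r}$. A secondary source of friction is the index bookkeeping in the final step, where one must verify that the antisymmetrizers $\alpha(p_j\,q_j)$, the differentiations coming from $R$, and the fixed top indices $i_{r+1}\dots i_k$ reassemble into $R^k(G_{m-r})$ with the correct sign and index placement rather than into some permuted variant.
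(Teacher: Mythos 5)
Your proposal is correct and follows essentially the same route as the paper: the John identity \eqref{john_ray_relation} applied to the fixed-index field $f^{i_1\dots i_k}$, integration over $\Sb^{n-1}$, Lemma~\ref{IBPlemma} for the $\xi$-derivatives of the homogeneous function $J_m^r f$, Lemma~\ref{key_lemma_mrt1} to produce $G_{m-r}$, the antisymmetry of $R^k$ to fix the sign $(-1)^{m-k}$, and density of $C_c^\infty$ for the distributional case. The only deviation is that your ``bridge lemma'' is exactly \cite[Lemma 4.2]{SumanSIAM}, which the paper cites rather than proves; your recursion $(m-s)\,Q^{(r)}_{i_1\dots i_s i}=\p_{\xi_i}Q^{(r)}_{i_1\dots i_s}-\p_{x_i}Q^{(r+1)}_{i_1\dots i_s}$ is a correct self-contained derivation of it.
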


\bpr As in the proof of Proposition \ref{key_lemma_RT}, we prove the identity for $f\in C_c^{\infty}(\Rb^n; S^m)$.

 We begin with the following relation which can be shown in exactly the same way as in \eqref{john_ray_relation} by considering the ray transform of the $m-k$ symmetric tensor field obtained by fixing the indices $i_1, \cdots, i_k$ and then applying the John operator $m-k$ times 
\begin{equation}
	(m-k)! (-2)^{m-k} I_0((R f^{i_1 \dots i_k})_{p_1 q_1 \dots p_{m-k} q_{m-k}}) =  J_{p_1 q_1} \cdots J_{p_{m-k} q_{m-k}}I_{m-k} \lb f^{i_1 \dots i_k}\rb.
\end{equation}

Substituting the expression for $J_{m-k}^0 f^{i_1 \dots i_k}$ from \cite[Lemma 4.2]{SumanSIAM} 
we get
\begin{align*}
	(-2)^{m-k}(m-k)! \,& J_0^0((Rf^{i_1 \dots i_k})_{p_1 q_1 \dots p_{m-k} q_{m-k}}) \\ &= 2^{m-k} \alpha(p_1 q_1) \dots \alpha(p_{m-k} q_{m-k}) \frac{\partial^{2m-2k}}{\partial x_{p_1} \dots \partial x_{p_{m-k}} \partial \xi_{q_1} \dots \partial \xi_{q_{m-k}}} 
	\\& \,\, \,\,\, \left [ \frac{(m-k)!}{m!} \sigma(i_1 \dots i_k) \sum\limits_{r=0}^k (-1)^r \binom{k}{r} \frac{\partial^k J_m^rf}{\partial x_{i_1} \dots \partial x_{i_r} \partial \xi_{i_{r+1}} \dots \partial \xi_{i_k}}  \right].
\end{align*}
Integrating both sides over $\mathbb{S}^{n-1}$, we get
\begin{align*}
	(-1)^{m-k}(m-k)! \, & N_0((Rf^{i_1 \dots i_k})_{p_1 q_1 \dots p_{m-k} q_{m-k}})  \\ &= \alpha(p_1 q_1) \dots \alpha(p_{m-k} q_{m-k}) \frac{(m-k)!}{m!} \sigma(i_1 \dots i_k)
	\\ &\quad  \Bigg [ \sum\limits_{r=0}^k (-1)^r \binom{k}{r} \frac{\partial^{m-k+r}}{\partial x_{i_1} \dots \partial x_{i_r} \partial x_{p_1} \dots \partial x_{p_{m-k}}}
	\\ & \qquad  \bigg \{ \int\limits_{\mathbb{S}^{n-1}} \frac{\partial^{m-r} J_m^rf}{\partial \xi_{i_{r+1}} \dots \partial \xi_{i_k} \partial \xi_{q_1} \dots \partial \xi_{q_{m-k}}} \, \Dxi \bigg\} \Bigg].
\end{align*}

Since $J_m^r f$ is a homogeneous function of degree $m-r-1$ in the $\xi$ variable, using Lemma \ref{IBPlemma}, we have
\begin{align*}
	(-1)^{m-k} (m-k)! \, & N_0^0 ((Rf^{i_1 \dots i_k})_{p_1 q_1 \dots p_{m-k} q_{m-k}})  \\ &= \alpha(p_1 q_1) \dots \alpha(p_{m-k} q_{m-k}) \frac{(m-k)!}{m!} \sigma(i_1 \dots i_k)\times
	\\ &\quad  \Bigg [ \sum\limits_{r=0}^k (-1)^r \binom{k}{r} \frac{\partial^{m-k+r}}{\partial x_{i_1} \dots \partial x_{i_r} \partial x_{p_1} \dots \partial x_{p_{m-k}}}
	\\ & \qquad \bigg \{ \sum\limits_{l=0}^{\lfloor\frac{m-r}{2} \rfloor} c_{l, m-r} \, i^l j^l \int\limits_{\Sn} \xi_{i_{r+1}} \dots \xi_{i_k} \xi_{q_1} \dots \xi_{q_{m-k}}  J_m^r f \, \Dxi   \bigg\} \Bigg],
\end{align*}
where the constants $c_{l, m-r}$ are  given in Lemma~\ref{IBPlemma}.

Finally, we use Lemma~\ref{key_lemma_mrt1} to express the last integral in terms of the normal operator,
\begin{align*}
	(m-k)! \,(-1)^{m-k} & N_0^0 ((Rf^{i_1 \dots i_k})_{p_1 q_1 \dots p_{m-k} q_{m-k}})  \\ &= \alpha(p_1 q_1) \dots \alpha(p_{m-k} q_{m-k}) \frac{(m-k)!}{m!} \sigma(i_1 \dots i_k)\times
	\\ &\quad  \Bigg [ \sum\limits_{r=0}^k (-1)^r \binom{k}{r} \frac{\partial^{m-k+r}}{\partial x_{i_1} \dots \partial x_{i_r} \partial x_{p_1} \dots \partial x_{p_{m-k}}}
	\\ & \qquad \bigg \{ \sum\limits_{l=0}^{\lfloor\frac{m-r}{2} \rfloor} c_{l, m-r} \, i^l j^l \big( \sum\limits_{p=0}^r (-1)^{r-p} \frac{1}{p!} \binom{r}{p}  (j_{x^{\odot r-p}} \delta^p N_m^p f)_{i_{r+1} \dots i_k q_1 \dots q_{m-k}}  \big) \bigg\} \Bigg]
	\\&=  \alpha(p_1 q_1) \dots \alpha(p_{m-k} q_{m-k}) \frac{(m-k)!}{m!} \sigma(i_1 \dots i_k)\times
	\\ &\quad  \Bigg [ \sum\limits_{r=0}^k (-1)^r \binom{k}{r} \frac{\partial^{m-k+r}}{\partial x_{i_1} \dots \partial x_{i_r} \partial x_{p_1} \dots \partial x_{p_{m-k}}} (G_{m-r})_{q_1 \dots q_{m-k} i_{r+1} \dots i_k} \Bigg].
\end{align*}
Since $\alpha(p_j q_j)$ for $1 \leq j \leq m-k$ commutes with $\sigma(i_1 \dots i_k)$, we take $\alpha(p_j q_j)$ inside the summation, and use the anti-symmetry of $R^k$ to finish the proof. 
\epr
With these preliminaries, let us prove Theorems \ref{mrt_ucp} and \ref{mrt_ucp_1}.
\begin{proof}[Proof of Theorem \ref{mrt_ucp}]
Since $N^p_m f|_{U}=0$ for $0\leq p\leq k$, the right hand side  of \eqref{key_equation_mrt} vanishes in $U$. This implies 
\begin{align*}
     N_0 ((Rf^{i_1 \dots i_k})_{p_1 q_1 \dots p_{m-k} q_{m-k}})=0 \quad \mbox{in} \quad U.
\end{align*}
By the definition in \eqref{an_alternate_definition}, $ R^kf|_{U}=0$ implies 
\begin{align*}
(Rf^{i_1 \dots i_k})_{p_1 q_1 \dots p_{m-k} q_{m-k}}=0   \quad \mbox{in} \quad U. 
\end{align*}
Applying unique continuation for the normal operator of the ray transform of scalar functions  \cite[Theorem 1.1]{Keijo_partial_function} on $(Rf^{i_1 \dots i_k})_{p_1 q_1 \dots p_{m-k} q_{m-k}}$, we conclude that  \[ (Rf^{i_1 \dots i_k})_{p_1 q_1 \dots p_{m-k} q_{m-k}}=0 \quad \mbox{in} \quad \mathbb{R}^n, \] for all $1 \leq i_1, \dots ,i_k, p_1, \dots, p_{m-k}, q_1, \dots, q_{m-k} \leq n$. Again using \eqref{an_alternate_definition}, we get $R^k f=0$  in $\Rb^n$. Combining Lemma \ref{rkwk_equivalence} and \cite[Theorem 2.17.2]{Sharafutdinov_Book}, we conclude the proof.
\end{proof}

\bpr[Proof of Theorem~\ref{mrt_ucp_1}]

By \cite[Lemma 4.8]{BKS}, we have that $J_m^k f$ determines $J_m^{r}f$ for all $r<k$. Hence $I_m^{0}f,\cdots, I_m^{k}f$ are determined on $\lb U\times \Sb^{n-1} \rb \cap T\Sb^{n-1}$. This then implies that we know $N_m^p f|_U$ for all $0\leq p\leq k$. Now using Theorem \ref{mrt_ucp}, we 
have the result.
\epr

\section{ucp for transverse ray transform}\label{UCP:TRT}
\bpr[Proof of Theorem \ref{ucp_trt}]
We proceed by an argument similar to the one used in \cite{Sharafutdinov_Book}. Let $f$ be a compactly supported tensor field distribution. Fix a non-zero vector $\eta\perp \xi$, and consider the compactly supported distribution: 
\[
\vp_{\eta}(x)= f_{i_1\cdots i_m}(x)\eta_{i_1}\cdots\eta_{i_m}.
\] 
The ray transform of $\vp_{\eta}$ is well-defined. 
Fix $x\in \mbox{supp}f$. Denote $V_H=V\cap H_{\eta} $, where $H_{\eta}$ is a hyperplane with normal $\eta$ and passing through $x$.  Note that $ V_{H}$ is an open set in $ \mathbb{R}^{n-1}$. For $ x\in V_{H}$, define $ \varphi_{\eta} (x)= f_{i_1\cdots i_m}(x)\eta_{i_1}\cdots\eta_{i_m}$ for a fixed $\eta$. From the knowledge of the transverse ray transform $\Tc f$, we have that 
\begin{align*}
    I^0 (\phi_{\eta})=0 \quad \mbox{and} \quad \phi_{\eta}=0 \quad \mbox{in} \quad V_H.
\end{align*}
Using unique continuation for scalar functions by \cite{Keijo_partial_function} we get $ \phi_{\eta}=0 $ in $ H_{\eta}$. We can vary $ \eta$ in an open cone $\mathcal{C}$ (say) and obtain $ \phi_{\eta} =0 $ for all $ \eta \in \mathcal{C}$. Any such cone always contains $ n$ linearly independent vectors  say $ \eta_1,\cdots, \eta_n$. Then the collection of  $\binom{m+n-1}{m}$ symmetric tensors  \[ A=\{\eta_{i_1}\odot \eta_{i_2}\odot \cdots \odot \eta_{i_m}: 1\le i_1,\cdots i_m\le n \}\] are linearly independent. This can be proved directly; see also \cite[Lemma 5.4]{Venky_and_Rohit}. This gives
\begin{align*}
     \langle f,\eta^{\odot m}\rangle =0 \quad \mbox{for all} \quad \eta \in A,
\end{align*}
which in turn gives $ f(x)=0$ for fixed $x$. Varying $x\in \mbox{supp}f$ we get $ f\equiv 0$.
\epr

\section*{Acknowledgments}
S.K.S. was supported by Academy of Finland (Centre of Excellence in Inverse Modelling and Imaging, grant  284715) and European Research Council under Horizon 2020 (ERC CoG 770924).

\bibliography{references.bib}
\bibliographystyle{plain}

\end{document}